\documentclass{article}

\usepackage{graphicx}
\usepackage{indentfirst}
\usepackage{amsmath,amsfonts,amsthm,amssymb}
\usepackage{mathrsfs}
\usepackage{amscd}

\def\co{\colon\thinspace}
\DeclareMathAlphabet{\mathsfsl}{OT1}{cmss}{m}{sl}

\newtheorem{thm}{Theorem}[section]
\newtheorem{lem}[thm]{Lemma}
\newtheorem{cor}[thm]{Corollary}
\newtheorem{prop}[thm]{Proposition}

\theoremstyle{definition}
\newtheorem{defn}[thm]{Definition}

\newtheorem{rem}[thm]{Remark}

\begin{document}

\title{Homological actions on sutured Floer homology}

\author{{\Large Yi NI}\\{\normalsize Department of Mathematics, Caltech, MC 253-37}\\
{\normalsize 1200 E California Blvd, Pasadena, CA
91125}\\{\small\it Emai\/l\/:\quad\rm yini@caltech.edu}}

\date{}
\maketitle

\begin{abstract}
We define the action of the homology group $H_1(M,\partial M)$ on the sutured Floer homology $SFH(M,\gamma)$.
It turns out that the contact invariant $EH(M,\gamma,\xi)$ is usually sent to zero by this action. This fact allows us to refine an earlier result proved by Ghiggini and the author. 
As a corollary, we classify knots in $\#^n(S^1\times S^2)$ which have simple knot Floer homology groups: They are essentially the Borromean knots.
\end{abstract}

\section{Introduction}

Heegaard Floer homology, introduced by Ozsv\'ath and Szab\'o
\cite{OSzAnn1}, is a powerful theory in low-dimensional topology.
In its most fundamental form, the theory constructs a chain
complex $\widehat{CF}(Y)$ for each closed oriented $3$--manifold
$Y$, such that the homology $\widehat{HF}(Y)$ of the chain complex
is a topological invariant of $Y$. In \cite{Ju1}, Juh\'asz adapted the construction of $\widehat{HF}(Y)$ to sutured manifolds, hence defined the sutured Floer homology $SFH(M,\gamma)$ for a balanced sutured manifold $(M,\gamma)$.

In \cite{OSzAnn1}, there is an action $A_{\zeta}$ on the Heegaard Floer homology for any  $\zeta\in H_1(Y;\mathbb Z)/\mathrm{Tors}$, which satisfies that $A^2_{\zeta}=0$. 
This induces a $\Lambda^*(H_1(Y;\mathbb Z)/\mathrm{Tors})$--module structure on the Heegaard Floer homology.
The goal of this paper is to define an action $A_{\zeta}$ on $SFH(M,\gamma)$ for any $\zeta\in H_1(M,\partial M;\mathbb Z)/\mathrm{Tors}$, 
hence make $SFH(M,\gamma)$ a $\Lambda^*(H_1(M,\partial M;\mathbb Z)/\mathrm{Tors})$--module. 

Juh\'asz \cite{Ju2} proved that if there is a sutured manifold decomposition
$$(M,\gamma)\stackrel{S}{\rightsquigarrow}(M',\gamma')
$$
with good properties, then ${SFH}(M',\gamma')$ is a direct summand of ${SFH}(M,\gamma)$. Hence there is an inclusion map
$\iota\co SFH(M',\gamma')\to SFH(M,\gamma)$ and a projection map
$\pi\co SFH(M,\gamma)\to SFH(M',\gamma')$ such that
$$\pi\circ\iota=\mathrm{id}.$$
It is natural to expect that Juh\'asz's decomposition formula respects the action of $H_1(M,\partial M;\mathbb Z)/\mathrm{Tors}$. Our 
main theorem confirms this expectation.

\begin{thm}\label{thm:Decomp}
Let $(M,\gamma)$ be a balanced sutured manifold and let
$$(M,\gamma)\stackrel{S}{\rightsquigarrow}(M',\gamma')
$$
be a well-groomed sutured manifold decomposition. Let $$i_*\co H_1(M,\partial M)\to H_1(M,(\partial M)\cup S)\cong H_1(M',\partial M')$$
be the map induced by the inclusion map $i\co (M,\partial M)\to (M,(\partial M)\cup S)$. If $\zeta\in H_1(M,\partial M)$, then $i_*(\zeta)\in H_1(M',\partial M')$. Then 
$$
\iota\circ A_{i_*(\zeta)}=A_{\zeta}\circ\iota,\quad A_{i_*(\zeta)}\circ\pi=\pi\circ A_{\zeta}.
$$
where $\iota,\pi$ are the inclusion and projection maps defined before.
\end{thm}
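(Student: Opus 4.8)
The plan is to deduce both identities from the single statement that, under the chain-level identification provided by a Heegaard diagram adapted to $S$, the restriction of $A_{\zeta}$ to the summand $SFC(M',\gamma')\subset SFC(M,\gamma)$ equals $A_{i_*(\zeta)}$, together with the observation that this summand is a union of relative $\mathrm{Spin}^c$ summands.

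First I would recall the mechanism behind Juh\'asz's decomposition formula \cite{Ju2}. For a well-groomed decomposition $(M,\gamma)\stackrel{S}{\rightsquigarrow}(M',\gamma')$ there is a balanced diagram $(\Sigma,\boldsymbol\alpha,\boldsymbol\beta)$ of $(M,\gamma)$ adapted to $S$, carrying a distinguished subsurface $P\subset\Sigma$ (a union of elementary regions) associated to $S$, such that deleting the part of the diagram lying in $P$ produces a balanced diagram $(\Sigma',\boldsymbol\alpha',\boldsymbol\beta')$ of $(M',\gamma')$. Its generators are exactly the ``outer'' generators $\mathfrak O$ of $(\Sigma,\boldsymbol\alpha,\boldsymbol\beta)$, these represent precisely the $S$-outer relative $\mathrm{Spin}^c$ structures of $(M,\gamma)$, and $SFC(M',\gamma')$ is literally the subcomplex $C'$ of $SFC(M,\gamma)$ spanned by $\mathfrak O$; thus $SFC(M,\gamma)=C'\oplus C''$, where $C'$ and $C''$ are each direct sums of relative $\mathrm{Spin}^c$ summands, with $\iota$ the inclusion of $C'$ and $\pi$ the projection onto it. The essential geometric input I will quote is Juh\'asz's disk lemma: any $\phi\in\pi_2(\mathbf x,\mathbf y)$ with $\mu(\phi)=1$, $\mathbf x\in\mathfrak O$, and a holomorphic representative has domain $\mathcal D(\phi)$ disjoint from $P$ and has $\mathbf y\in\mathfrak O$; moreover these $\phi$ are in bijection, with equal signed counts $\#\widehat{\mathcal M}(\phi)$, with the corresponding classes of $(\Sigma',\boldsymbol\alpha',\boldsymbol\beta')$, and under the bijection the domains agree off $P$.

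The reduction is then formal. Because $A_{\zeta}$ counts only $\mu=1$ disk classes, and such a class joins generators representing the same relative $\mathrm{Spin}^c$ structure, $A_{\zeta}$ respects the decomposition $C'\oplus C''$; hence $A_{\zeta}\circ\iota=\iota\circ(A_{\zeta}|_{C'})$ and $\pi\circ A_{\zeta}=(A_{\zeta}|_{C'})\circ\pi$ at the chain level, and the theorem will follow once $A_{\zeta}|_{C'}=A_{i_*(\zeta)}$. To obtain this, I would arrange the curve $\gamma_{\zeta}$ on $\Sigma$ that computes $A_{\zeta}$ to be \emph{disjoint from} $P$: a representative of $\zeta$ may be pushed off $S$ and then isotoped in $M$ onto $\Sigma'=\Sigma\setminus\mathrm{int}\,P$, and the computation of $A_{\zeta}$ does not depend on this choice. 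Regarded on $\Sigma'$, the same curve $\gamma_{\zeta}$ represents $i_*(\zeta)\in H_1(M,\partial M\cup S)\cong H_1(M',\partial M')$, so it computes $A_{i_*(\zeta)}$ on $SFC(M',\gamma')$. Finally, for $\mathbf x,\mathbf y\in\mathfrak O$ the weight $a_{\gamma_{\zeta}}(\phi)$ that the definition of $A_{\zeta}$ attaches to a contributing class $\phi$ depends only on the multiplicities of $\mathcal D(\phi)$ in the regions adjacent to $\gamma_{\zeta}$, all of which lie off $P$; by the disk lemma such a $\phi$ corresponds to a class $\phi'$ of $(\Sigma',\boldsymbol\alpha',\boldsymbol\beta')$ with $\#\widehat{\mathcal M}(\phi)=\#\widehat{\mathcal M}(\phi')$ and $\mathcal D(\phi)=\mathcal D(\phi')$ off $P$, so $a_{\gamma_{\zeta}}(\phi)=a_{\gamma_{\zeta}}(\phi')$ and the two chain-level formulas agree term by term.

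I expect the main obstacle to lie in making the comparison of weights genuinely rigorous: one must follow exactly how the elementary regions and the curve segments of $(\Sigma,\boldsymbol\alpha,\boldsymbol\beta)$ near $\gamma_{\zeta}$ are carried to those of $(\Sigma',\boldsymbol\alpha',\boldsymbol\beta')$ when $P$ is deleted, and check in particular that no contribution to $a_{\gamma_{\zeta}}(\cdot)$ is created or destroyed among the arcs of $\partial P$ that get altered; this is the point at which the precise form of Juh\'asz's adapted diagrams is used. A lesser point is to confirm that $\gamma_{\zeta}$ can be taken disjoint from $P$ inside the class $\zeta$, which follows from general position once one notes that $P$ lies in a bicollar of $S$, so that a representative of $\zeta$ pushed off $S$ misses it. Granting Juh\'asz's disk lemma, the $\mathrm{Spin}^c$--refinement of $A_{\zeta}$, and the independence of $A_{\zeta}$ of the auxiliary choices, the rest is bookkeeping.
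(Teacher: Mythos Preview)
Your overall strategy---show that the chain-level action $A_{\zeta}$ restricts on the ``outer'' subcomplex to $A_{i_*(\zeta)}$, and deduce both identities from that---is exactly the paper's. The difficulty is that several of the geometric facts you invoke about Juh\'asz's adapted diagram are not correct as stated, and the paper's proof turns precisely on getting these details right.

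First, $\Sigma'$ is \emph{not} $\Sigma\setminus\mathrm{int}\,P$. In Juh\'asz's construction one has $\partial P=A\cup B$ with $A\cap\boldsymbol\beta=\emptyset$ and $B\cap\boldsymbol\alpha=\emptyset$, and $\Sigma'=(\Sigma\setminus P)\cup P_A\cup P_B$ is obtained by gluing \emph{two} copies of $P$ back along $A$ and $B$ respectively, with a projection $p\co\Sigma'\to\Sigma$. Second, the ``disk lemma'' you quote is too strong: Maslov-one holomorphic disks between outer generators can and do have domain meeting $P$; what Juh\'asz actually proves (and what the paper records as Lemma~\ref{lem:DiskP}) is that each component of $\mathcal D(\phi)\cap P$ is a bigon or square whose $\partial P$--edges lie entirely in $A$ (with only $\alpha$--edges) or entirely in $B$ (with only $\beta$--edges). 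Third, one cannot in general take the representing cycle $\omega$ disjoint from $P$; the paper's Lemma~\ref{lem:IntersectA} only arranges that $\omega\cap\partial P\subset\mathrm{int}\,A$, so that $\omega$ lifts to $\omega'\subset(\Sigma\setminus P)\cup_A P_A\subset\Sigma'$ representing $i_*(\zeta)$. The comparison $\omega\cdot(\partial_{\alpha}\phi)=\omega'\cdot(\partial_{\alpha}\phi')$ is then made region by region: outside $P$ the domains match under $p$; for a component $C\subset P$ with $C\cap\partial P\subset A$ the lift $C'$ sits in $P_A$ and the $\alpha$--edges and their intersections with $\omega$ correspond; for $C\cap\partial P\subset B$ the lift sits in $P_B$ where $\omega'$ is absent, but by Lemma~\ref{lem:DiskP} such a $C$ has no $\alpha$--edges, so it contributes nothing on either side. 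Your sketch collapses all of this to ``disjoint from $P$'', which is where the argument would fail.
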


A corollary of Theorem~\ref{thm:Decomp} is the following one.

\begin{cor}\label{cor:KerFiber}
Suppose $K$ is a nullhomologous knot in a closed oriented manifold $Y$ such that $Y-K$ is irreducible. Suppose $F$ is a Thurston norm minimizing Seifert surface for $K$. Let 
$$\mathrm{Ker} A=\left\{x\in\widehat{HFK}(Y,K,[F],-g)\Big|\:A_{\zeta}(x)=0 \text{ for all }\zeta\in H_1(Y)/\mathrm{Tors}\right\},$$
which is a subgroup of $\widehat{HFK}(Y,K,[F],-g)$. If $F$ is not the fiber of any fibration (if there is any) of $Y-K$, then the rank of $\mathrm{Ker} A$ is at least $2$.
\end{cor}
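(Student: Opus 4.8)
The plan is to translate the statement into sutured Floer homology, where Theorem~\ref{thm:Decomp} and Gabai's theory of sutured manifold hierarchies can be brought to bear. Let $(Y_K,\mu)$ be the exterior of $K$ with a pair of meridional sutures, so that $SFH(Y_K,\mu)\cong\widehat{HFK}(Y,K)$ compatibly with the $\mathrm{Spin}^c$/Alexander gradings and with the homological actions under the isomorphism $H_1(Y_K,\partial Y_K)\cong H_1(Y)$. Decomposing $(Y_K,\mu)$ along $-F$ yields, by Juh\'asz's surface decomposition theorem, a balanced sutured manifold $(M',\gamma')$ with $SFH(M',\gamma')\cong\widehat{HFK}(Y,K,[F],-g)$ (the surface $-F$ picks out the bottom Alexander grading). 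Here $i_*\co H_1(Y_K,\partial Y_K)\to H_1(M',\partial M')$ is surjective: in the long exact sequence of the triple $(Y_K,\,\partial Y_K\cup F,\,\partial Y_K)$ the relevant relative $H_0$ vanishes because every component of $F$ meets $\partial Y_K$. Feeding this into Theorem~\ref{thm:Decomp} (applied to $(Y_K,\mu)\stackrel{-F}{\rightsquigarrow}(M',\gamma')$) shows that the isomorphism $\widehat{HFK}(Y,K,[F],-g)\cong SFH(M',\gamma')$ carries $\mathrm{Ker}\,A$ onto the subgroup of elements of $SFH(M',\gamma')$ killed by $A_\eta$ for \emph{every} $\eta\in H_1(M',\partial M')/\mathrm{Tors}$. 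So it suffices to produce two linearly independent such elements. (One may assume $g\ge 1$; for $g=0$ the hypotheses are vacuous.)

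Next the geometric input. Since $Y-K$ is irreducible and $F$ is Thurston norm minimizing, $(M',\gamma')$ is a taut balanced sutured manifold; and since $F$ is not a fiber of any fibration of $Y-K$, $(M',\gamma')$ is not a product sutured manifold. By Gabai it admits a well-groomed sutured manifold hierarchy $(M',\gamma')=(M_0,\gamma_0)\stackrel{S_1}{\rightsquigarrow}\cdots\stackrel{S_n}{\rightsquigarrow}(M_n,\gamma_n)$ with $(M_n,\gamma_n)$ a product, so $SFH(M_n,\gamma_n)\cong\mathbb Z$; by Juh\'asz's decomposition formula each $\iota_k\co SFH(M_k,\gamma_k)\hookrightarrow SFH(M_{k-1},\gamma_{k-1})$ is a split injection with image $\bigoplus_{\mathfrak s\in O_{S_k}}SFH(M_{k-1},\gamma_{k-1};\mathfrak s)$. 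The key observation is that the $\mathbb Z$--summand $\mathrm{Im}(\iota_1\circ\cdots\circ\iota_n)$ of $SFH(M',\gamma')$ lies in $\bigcap_\eta\ker A_\eta$: iterating the first identity of Theorem~\ref{thm:Decomp} down the hierarchy gives $A_\eta\circ(\iota_1\cdots\iota_n)=(\iota_1\cdots\iota_n)\circ A_{\eta'}$ for some $\eta'\in H_1(M_n,\partial M_n)/\mathrm{Tors}$, but $H_1(M_n,\partial M_n)=0$ for a product sutured manifold, so $A_{\eta'}=A_0=0$. (This is exactly the mechanism by which the contact invariant vanishes: $\iota_1\cdots\iota_n$ sends the contact class of the product contact structure on $(M_n,\gamma_n)$ to a contact class on $(M',\gamma')$.)

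This produces one $\mathbb Z$; for a second, independent one, replace $(M',\gamma')$ by $(M_{j-1},\gamma_{j-1})$, where $j$ is least with $\iota_j$ not an isomorphism --- such $j$ exists, for otherwise $SFH(M',\gamma')\cong SFH(M_n,\gamma_n)\cong\mathbb Z$ would force $(M',\gamma')$ to be a product. Since $\iota_1,\dots,\iota_{j-1}$ are isomorphisms and the corresponding $(i_k)_*$ are surjective, Theorem~\ref{thm:Decomp} identifies $\bigcap_\eta\ker A_\eta$ for $(M',\gamma')$ with that for $(M_{j-1},\gamma_{j-1})$, so no generality is lost; renaming $(M_{j-1},\gamma_{j-1})$ as $(M',\gamma')$ and $S_j$ as $S$, the split injection $\iota_S$ is now \emph{not} surjective, i.e.\ $O_S$ is a proper subset of the $\mathrm{Spin}^c$--support of $SFH(M',\gamma')$. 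Because $S$ is well-groomed, $-S$ also decomposes $(M',\gamma')$ into a taut sutured manifold, which has its own well-groomed hierarchy ending in a product; running the argument of the previous paragraph along this second hierarchy produces a second $\mathbb Z$--summand of $SFH(M',\gamma')$ inside $\bigcap_\eta\ker A_\eta$, now contained in $\bigoplus_{\mathfrak s\in O_{-S}}SFH(M',\gamma';\mathfrak s)$. Finally $O_S$ and $O_{-S}$ are disjoint --- they are the subsets of the support on which $\langle c_1(\cdot),[S]\rangle$ is maximal, respectively minimal, and were they equal then $O_S$ would be the whole support, contradicting non-surjectivity of $\iota_S$. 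Disjointness of supports forces the two $\mathbb Z$--summands to intersect trivially, so $\mathrm{rank}\,\mathrm{Ker}\,A\ge 2$.

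The easy part of all this is the vanishing $A_\eta|_{\mathrm{Im}(\iota_1\cdots\iota_n)}=0$, which is a purely formal consequence of Theorem~\ref{thm:Decomp} together with the triviality of $H_1$ of a product sutured manifold. The step I expect to require the most care is the last one: extracting \emph{two} independent summands, which forces one both to descend to the stage of the hierarchy at which $SFH$ genuinely decreases and to keep track of the $\mathrm{Spin}^c$--supports of the hierarchy inclusion maps, so that the $S$-- and $-S$--constructions land in disjoint $\mathrm{Spin}^c$--structures.
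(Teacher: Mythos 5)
Your overall framework is the same as the paper's: pass to the sutured manifold obtained by decomposing the knot exterior along $F$, and iterate Theorem~\ref{thm:Decomp} down a well-groomed hierarchy ending in a product, where $H_1(M_n,\partial M_n)=0$, so that the image of the bottom $\mathbb Z$ lies in $\bigcap_\eta\ker A_\eta$; this is exactly the mechanism of Corollary~\ref{cor:EH}, and your first kernel element is the paper's (the surjectivity of $i_*$ you argue for is not even needed, since killing by all classes downstairs already suffices to push kernel elements up via $A_\zeta\circ\iota=\iota\circ A_{i_*(\zeta)}$). The divergence is in how the \emph{second}, independent element is produced. The paper does not get it by reversing the orientation of a hierarchy surface: it cites the Ghiggini--Ni argument (and Juh\'asz's adaptation in [Ju2]) which constructs two genuinely different well-groomed hierarchies, equivalently two tight contact structures of hierarchy type with linearly independent $EH$ classes, and that construction is the geometric heart of the statement. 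Your proposal tries to replace that input, and this is where it has genuine gaps.

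Two specific steps are unjustified. First, ``because $S$ is well-groomed, $-S$ also decomposes $(M',\gamma')$ into a taut sutured manifold'' does not follow: Gabai's theorem produces, for each class in $H_2(M,\partial M)$, \emph{some} well-groomed surface in that class whose decomposition is taut; it does not say that the orientation reverse of a given taut well-groomed decomposing surface again gives a taut decomposition, and tautness is precisely what you need both to know the $-S$--summand is nonzero and to continue with a second hierarchy. Second, the disjointness of $O_S$ and $O_{-S}$ within the support of $SFH(M',\gamma')$ is asserted via the claim that these are the maximal, respectively minimal, level sets of $\langle c_1(\cdot),[S]\rangle$ on the support. What [Ju2] provides is a characterization of outer $\mathrm{Spin}^c$ structures by $\langle c_1(\mathfrak s,t),[S]\rangle=c(S,t)$; the additional adjunction-type statement that every $\mathrm{Spin}^c$ structure in the support satisfies the corresponding inequality (so that non-surjectivity of $\iota_S$ forces the evaluation to be non-constant on the support, and the two extremal sets are disjoint) is not established in this paper or in the portion of [Ju2] used here --- it is essentially the content of Juh\'asz's later polytope results, whose proofs themselves run Gabai's construction separately for $[S]$ and $-[S]$ with possibly different surfaces. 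Without these two points the second $\mathbb Z$ and its independence from the first are not established. (Your use of ``$SFH\cong\mathbb Z$ implies product'' to locate the index $j$ is legitimate, since that is the previously known Ghiggini--Ni/Juh\'asz theorem which this corollary refines; but the subsequent steps need the stronger input that the paper obtains by citation.)
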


Ghiggini \cite{Gh} and the author \cite{NiFibred} have proved that if $F$ is not the fiber of any fibration of $Y-K$, then the rank of $\widehat{HFK}(Y,K,[F],-g)$ is at least $2$. Since $\mathrm{Ker} A$ is a subgroup of $\widehat{HFK}(Y,K,[F],-g)$, 
the above corollary can be viewed as  a refinement of the theorem of Ghiggini and the author.

\subsection{Knots in $\#^n(S^1\times S^2)$ with simple knot Floer homology}

Corollary~\ref{cor:KerFiber} is most useful when $\widehat{HF}(Y)$ has a rich $\Lambda^*(H_1(Y)/\mathrm{Tors})$--module structure. 
As an illustration, we will study knots in $\#^n(S^1\times S^2)$ that have simple knot Floer homology.

Suppose $K$ is a rationally null-homologous knot in $Y$,
Ozsv\'ath--Szab\'o \cite{OSzKnot,OSzRatSurg} and Rasmussen
\cite{RasThesis} showed that $K$ specifies a filtration on
$\widehat{CF}(Y)$. The associated homology of the filtered chain
complex is the knot Floer homology $\widehat{HFK}(Y,K)$.
From the construction of knot Floer homology, one sees that
$$\mathrm{rank}\:\widehat{HFK}(Y,K)\ge\mathrm{rank}\:\widehat{HF}(Y),$$
for any rationally null-homologous knot $K\subset Y$. When the
equality holds, we say that the knot has {\it simple} knot Floer
homology.

To an oriented null-homologous $n$--component link $L\subset Y$,
Ozsv\'ath and Szab\'o \cite{OSzKnot} associated a null-homologous knot
$\kappa(L)\subset \kappa(Y)=Y \#^{n-1}(S^1\times S^2)$, and
defined the knot Floer homology of $L$ to be the knot Floer
homology of $\kappa(L)$. Hence
$$\mathrm{rank}\:\widehat{HFK}(Y,L)\ge\mathrm{rank}\:\widehat{HF}(\kappa(Y))=2^{n-1}\mathrm{rank}\:\widehat{HF}(Y).$$
When the equality holds, we say that the link has {\it simple}
knot Floer homology.

Clearly, the unknot in $Y$ always has simple Floer homology. Sometimes there are other knots with this property. For example, 
the core of a solid torus in the genus--$1$ Heegaard splitting of a lens space has simple Floer homology. Moreover, if two knots $(Y_1,K_1)$ and $(Y_2,K_2)$ have simple 
Floer homology groups, then their connected sum $(Y_1\#Y_2,K_1\#K_2)$ also has simple Floer homology. In particular, $(Y_1\#Y_2,K_1)$ has simple Floer homology.

It is an interesting problem to determine all knots with simple Floer homology.
For example, Hedden \cite{HedBerge} and Rasmussen \cite{RasBerge}
showed that if a knot $L\subset S^3$ admits an integral lens space
surgery, then the core of the surgery is a knot in
the lens space with simple knot Floer homology group. Hence the classification of knots with simple Floer homology groups in
lens spaces will lead to a resolution of Berge's conjecture on
lens space surgery.

For certain $3$--manifolds, the classification of 
knots with simple Floer homology groups are already known. A deep theorem of Ozsv\'ath--Szab\'o
\cite[Theorem~1.2]{OSzGenus} implies that the only 
knot in $S^3$ with simple Floer homology group is the unknot. The author \cite{NiLink} generalized
Ozsv\'ath--Szab\'o's theorem. As a corollary, if $Y$ is a integer
homology sphere which is an $L$--space, then the unknot is the
only knot in $Y$ with simple Floer homology group. Eftekhary \cite{Ef} has announced a
generalization of this result to knots in any homology sphere.

\begin{figure}
\begin{picture}(340,85)
\put(70,0){\scalebox{0.5}{\includegraphics*[0pt,0pt][400pt,
170pt]{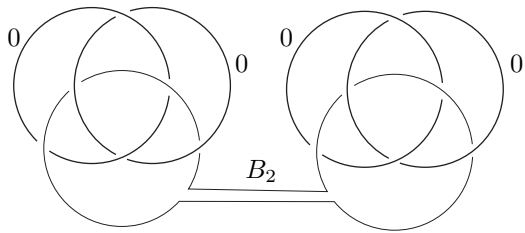}}}

\put(258,60){$0$}

\put(154,60){$0$}

\put(171,70){$0$}

\put(68,70){$0$}

\put(158,20){$B_2$}

\end{picture}
\caption{The Borromean knot $B_2$}\label{fig:Borro}
\end{figure}

We will determine all knots in $Y_n=\#^n(S^1\times
S^2)$ with simple knot Floer homology. Besides the unknot, there is a class of knots called Borromean knots which have simple Floer homology groups.
Consider the Borromean rings. Perform $0$--surgery on two components of the Borromean rings, then the third component becomes a knot in $Y_2$, called the Borromean knot $B_1$.
The Borromean knot $B_k\subset Y_{2k}$ is obtained by taking the connected sum of $k$ $B_1$'s, and $B_0$ is understood to be the unknot in $S^3$. See Figure~\ref{fig:Borro}. Borromean knots are characterized by the fact that $B_k$ is the binding of an open book decomposition of $Y_{2k}$, 
such that the page is a genus $k$ surface with one boundary component, and the monodromy is the identity map.
It is known that $\widehat{HFK}(Y_{2k},B_k)$ has rank $2^{2k}$ \cite{OSzKnot}.

\begin{thm}\label{thm:Borromean}
Suppose $K\subset Y_n=\#^n(S^1\times S^2)$ is a null-homologous
knot with simple knot Floer homology. Namely,
$$\mathrm{rank}\:\widehat{HFK}(Y_n,K)=2^{n-1}.$$
Then there exists a non-negative integer $k\le\frac n2$ such that
$(Y_n,K)$ is obtained from the Borromean knot $(Y_{2k},B_k)$ by
taking connected sum with $Y_{n-2k}$ in the complement of $B_k$.
\end{thm}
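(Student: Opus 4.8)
The plan is to induct on $n$, using Corollary~\ref{cor:KerFiber} to force the complement of $K$ to fiber over the circle, and then to analyze the fibration. First I would reduce to the irreducible case: if $Y_n - K$ is reducible, a sphere in the complement either bounds a ball (and can be ignored) or splits off an $S^1\times S^2$ summand, giving a connected-sum decomposition $(Y_n,K)=(Y_{n-1},K')\#(S^1\times S^2)$; a K\"unneth-type formula for $\widehat{HFK}$ under connected sum shows $K'\subset Y_{n-1}$ again has simple knot Floer homology, so one finishes by induction. The subtle point here is ruling out, or absorbing, the case where the reducing sphere is nonseparating but $K$ meets... no --- $K$ is disjoint from the sphere, so this dichotomy is the only one. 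So assume $Y_n-K$ is irreducible.

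Next, let $F$ be a Thurston-norm-minimizing Seifert surface for $K$, of genus $g$. Since $\widehat{HFK}(Y_n,K)$ is supported in Alexander gradings $[-g,g]$ and has total rank $2^{n-1}=\operatorname{rank}\widehat{HF}(Y_n)$, and since the top group $\widehat{HFK}(Y_n,K,[F],g)$ is nonzero (by the detection of genus, or simply because the Euler characteristic in the top grading is $\pm1$ for a null-homologous knot and here coincides with a piece of $\widehat{HF}$), a rank count shows that in fact the rank of $\widehat{HFK}$ in the extremal grading $-g$ must be small. The key input is Corollary~\ref{cor:KerFiber}: $\widehat{HF}(Y_n)=\Lambda^*(H_1(Y_n))\cong\Lambda^*(\mathbb{Z}^n)$ is a free exterior algebra on $n$ generators, so the kernel of the total homological action $A$ on $\widehat{HFK}(Y_n,K,[F],-g)$, which embeds into $\widehat{HF}(Y_n)$ compatibly with the $\Lambda^*$-module structure, must have rank $0$ or $1$ (the only submodules of $\Lambda^*(\mathbb{Z}^n)$ on which $A$ acts with small kernel are the ones concentrated in the bottom degree, of rank $1$). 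Hence $\operatorname{rank}\mathrm{Ker}\,A\le 1$, so by Corollary~\ref{cor:KerFiber} the surface $F$ \emph{is} the fiber of a fibration of $Y_n-K$ over $S^1$. This is the heart of the argument.

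Now $K$ is the binding of an open book decomposition of $Y_n$ with page $F$ of genus $g$ and some monodromy $\varphi\co F\to F$. I would then show $g$ and $\varphi$ are as claimed. Matching Euler characteristics, or using the behavior of $\widehat{HF}$ under surgery along the binding, gives $2g = $ (something $\le n$); more precisely, the open book presents $Y_n$, and $\operatorname{rank}\widehat{HF}(Y_n)=2^n$ forces the $0$-surgery and $\infty$-surgery manifolds to have matching Floer homology, which via the mapping-torus/twisted-homology computation pins down that $\varphi$ acts trivially on $H_1(F)$ and in fact, by a theorem identifying open books with simple-Floer-homology bindings, that $\varphi$ is isotopic to a product of the identity on a genus-$k$ piece with an arbitrary open book on a punctured-sphere (planar) piece — i.e.\ $(Y_n,K)$ is the connected sum of $(Y_{2k},B_k)$ with an open book on $\#^{n-2k}(S^1\times S^2)$ whose binding is the unknot, which is exactly the asserted connected-sum description. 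I expect the main obstacle to be this last step: extracting from ``the monodromy acts trivially on homology'' (which is what the Floer-theoretic rank count naturally yields) the stronger geometric conclusion that the monodromy is, up to the planar connect-summand, literally the identity; this requires combining the fiberedness with the sutured decomposition machinery of Theorem~\ref{thm:Decomp} applied to a sequence of product annuli/disks that peel $F$ down to the genus-$k$ core, checking at each stage that simplicity of knot Floer homology is preserved by keeping track of the homological action via $\iota$ and $\pi$.
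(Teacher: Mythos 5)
The first half of your argument---removing $S^1\times S^2$ summands to reduce to $Y_n-K$ irreducible, and then using the $\Lambda^*$-module structure of $\widehat{HF}(Y_n)\cong\Lambda^*V_n'$ together with Corollary~\ref{cor:KerFiber} to force $\mathrm{rank}\,\mathrm{Ker}\,A\le 1$ and hence fiberedness of $Y_n-K$ with fiber $F$---is exactly the paper's route (Lemma~\ref{lem:Ker1} and Proposition~\ref{prop:FiberGenus}). The endgame, however, has two genuine gaps. First, you never pin down the genus. In the irreducible case the paper proves $n=2g$ exactly, by a Maslov-grading count: the bottom generator $\mathbf 1$ is obtained from $\Delta=\zeta_1'\wedge\cdots\wedge\zeta_n'$ by applying $n$ homological actions, each dropping the Maslov grading by $1$, while the symmetry of knot Floer homology (\cite[Proposition~3.10]{OSzKnot}) says the top-to-bottom grading shift is $2g$; hence $n=2g$. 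Your substitutes (``matching Euler characteristics,'' comparing $0$- and $\infty$-surgeries) are not arguments, and your proposed output---identity monodromy on a genus-$k$ piece connect-summed with a planar open book---cannot even occur after you have reduced to the irreducible case, since such a connected sum in the complement would make $Y_n-K$ reducible.

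Second, and more seriously, the passage from Floer data to the monodromy is unsupported. What a homological computation naturally yields is that $\varphi$ acts trivially on $H_1(F)$, and that is far too weak: the Torelli group is enormous, and the ``theorem identifying open books with simple-Floer-homology bindings'' you invoke does not exist in this paper. The paper's actual trick is group-theoretic: filling in the meridian presents $\pi_1(Y_{2g})$ as the quotient of the free group $\pi_1(F)$ of rank $2g$ by the relations $\varphi_*(a)a^{-1}$; since $\pi_1(Y_{2g})$ is itself free of rank $2g$ and finitely generated free groups are Hopfian \cite{MKS}, all relations are trivial, so $\varphi_*=\mathrm{id}$ on $\pi_1(F)$. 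Then homotopy implies isotopy rel $\partial F$ (because $F$ is a $K(\pi,1)$), so the complement is $F\times S^1$, and one still needs Gabai's theorem \cite[Corollary~2.14]{G2} that knots in $Y_{2g}$ not contained in a $3$-cell are determined by their complements to conclude $K=B_g$---a step entirely absent from your sketch; knowing the complement alone does not identify the knot. Your alternative plan of ``peeling $F$ down to the genus-$k$ core'' with product annuli while tracking the action via Theorem~\ref{thm:Decomp} is not developed enough to substitute for these steps.
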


A difference between Theorem~\ref{thm:Borromean} and the previously known classification results of simple knots is, the simple knots in $Y_n$ include some nontrivial knots. 
Such situation also appears when one tries to classify simple knots in lens spaces.

For links in $S^3$, it is easy to determine which ones have simple Floer
homology. Our result is as follows.

\begin{prop}\label{prop:LinkRank}
Suppose $L$ is an $n$--component link in $S^3$. If the rank of its
knot Floer homology $\widehat{HFK}(L)$ is $2^{n-1}$, then $L$ is
the $n$--component unlink.
\end{prop}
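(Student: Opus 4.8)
The plan is to translate the hypothesis, via the defining property of the link invariant, into a statement about the knot $\kappa(L)$, apply Theorem~\ref{thm:Borromean}, and then induct on the number of components $n$.

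Recall that $\widehat{HFK}(S^3,L)=\widehat{HFK}(Y_{n-1},\kappa(L))$, where $\kappa(L)$ is a null-homologous knot in $Y_{n-1}=\#^{n-1}(S^1\times S^2)$, and that $\mathrm{rank}\,\widehat{HF}(Y_{n-1})=2^{n-1}$. Thus the hypothesis $\mathrm{rank}\,\widehat{HFK}(L)=2^{n-1}$ says exactly that $\kappa(L)$ has simple knot Floer homology. Applying Theorem~\ref{thm:Borromean} (with $Y_{n-1}$ in place of $Y_n$), we obtain an integer $k$ with $0\le k\le\frac{n-1}{2}$ such that $(Y_{n-1},\kappa(L))$ is obtained from the Borromean knot $(Y_{2k},B_k)$ by taking connected sum with $Y_{n-1-2k}$ in the complement of $B_k$. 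Since that connected sum is performed in a ball disjoint from $B_k$, the Seifert genus of $\kappa(L)$ equals $g(B_k)=k$.

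The next step is to read this off $L$ itself. Let $\chi_{\max}(L)$ denote the largest Euler characteristic among Seifert surfaces for $L$ having no closed components. From such a surface $F$ the $\kappa$--construction builds a connected Seifert surface for $\kappa(L)$ by replacing the $n-1$ model tangles that merge the components of $L$, each replacement being a disk; hence $g_{\min}(\kappa(L))\le\tfrac12\bigl(n-\chi_{\max}(L)\bigr)$, and the reverse inequality is Ozsv\'ath--Szab\'o's theorem that link Floer homology detects the Thurston norm of the link complement. Combining this with the previous paragraph gives $\chi_{\max}(L)=n-2k$. Now take a Seifert surface $F_0$ for $L$, with no closed components, realizing this value; since $\partial F_0=L$ accounts for exactly $n$ boundary circles, if $F_0$ has $c$ components of genera $g_1,\dots,g_c$ then $n-2k=\chi(F_0)=2c-2\sum_j g_j-n$, so $c=(n-k)+\sum_j g_j\ge n-k\ge\frac{n+1}{2}$. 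When $n\ge2$ this forces $c\ge2$, and partitioning the components of $F_0$ into two nonempty groups exhibits $L$ as a split link $L_1\sqcup L_2$, each $L_i$ having $1\le n_i<n$ components.

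Finally we induct on $n$. For $n=1$ we have $\kappa(L)=L$ and Theorem~\ref{thm:Borromean} gives that $L$ is the unknot (this is the unknot detection theorem of Ozsv\'ath--Szab\'o). For $n\ge2$ and $L$ simple, write $L=L_1\sqcup L_2$ as above; using the disjoint union formula $\mathrm{rank}\,\widehat{HFK}(L_1\sqcup L_2)=2\,\mathrm{rank}\,\widehat{HFK}(L_1)\cdot\mathrm{rank}\,\widehat{HFK}(L_2)$ we get $2^{n-1}=2\,\mathrm{rank}\,\widehat{HFK}(L_1)\cdot\mathrm{rank}\,\widehat{HFK}(L_2)\ge 2\cdot 2^{n_1-1}\cdot 2^{n_2-1}=2^{n-1}$, so each $L_i$ is simple; by the inductive hypothesis $L_1$ and $L_2$ are unlinks, hence so is $L$. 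The only genuinely substantive ingredient is the equality $g_{\min}(\kappa(L))=\tfrac12(n-\chi_{\max}(L))$: one inequality is the elementary surface construction above, while the other --- the part I expect to be the real obstacle --- is the Thurston norm detection result applied to $\kappa(L)$. Everything else is bookkeeping with Euler characteristics together with Theorem~\ref{thm:Borromean} and the disjoint union formula, with one further small point to check carefully: that a maximal-$\chi$ Seifert surface may always be chosen with no closed components.
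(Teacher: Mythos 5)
Your reduction to $\kappa(L)$ is fine as far as it goes: the hypothesis does say that $\kappa(L)\subset Y_{n-1}$ is simple, Theorem~\ref{thm:Borromean} applies, the genus of $\kappa(L)$ is $k$, and the Euler characteristic bookkeeping giving $\chi_{\max}(L)=n-2k$ and hence a disconnected maximal-$\chi$ Seifert surface (for $n\ge 2$) is correct, modulo the norm-detection citation you flag. This is genuinely different from the paper's proof, which never invokes Theorem~\ref{thm:Borromean}: it inducts on $n$ using the differential $D^1$ on $\widehat{HFL}(L)$ from \cite[Proposition~7.1]{OSzLink} to force $\widehat{HFL}(L)\cong\widehat{HFL}(L-K_1)\otimes M$, concludes that $\widehat{HFL}(L)$ is supported on a line in $\underline{\mathrm{Spin}^c}(L)$, and gets a contradiction from the Thurston norm theorem \cite[Theorem~1.1]{OSzLinkNorm}.

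However, there is a genuine gap at the step ``partitioning the components of $F_0$ into two nonempty groups exhibits $L$ as a split link.'' A disconnected Seifert surface does not split the link: the two groups of components of $F_0$ are disjoint embedded surfaces in $S^3$, but nothing produces an embedded $2$--sphere separating their boundaries. Links whose components bound pairwise disjoint Seifert surfaces are exactly boundary links, and there are non-split boundary links (the Whitehead link is the standard example: its two components bound disjoint genus-one surfaces, yet it is not split, and a fortiori a maximal-$\chi$ Seifert surface for it can be disconnected). So the implication $c\ge 2\Rightarrow L$ split is false, and with it the inductive descent and the use of the disjoint-union rank formula collapse. What you would actually need at this point is either to show $k=0$ directly (then $\chi_{\max}(L)=n$ does force $n$ disjoint disks, i.e.\ the unlink), or to extract splitness from Floer-theoretic data rather than from the surface; the paper's argument via the multi-graded support of $\widehat{HFL}(L)$ and \cite{OSzLinkNorm} is one way to close exactly this kind of gap, and as written your proof does not.
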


\begin{rem}
It is proved by the author \cite{NiLink} that knot Floer
homology detects unlinks. However, the main result there does not imply that the rank of knot Floer homology
detects unlinks. Proposition~\ref{prop:LinkRank} remedies this omission.
\end{rem}

This paper is organized as follows. In Section~2 we will define the action on sutured Floer homology and prove Theorem~\ref{thm:Decomp}. In Section~3 we will study the effect of the action on the 
contact invariant $EH(M,\gamma,\xi)$. Then we will prove Corollary~\ref{cor:KerFiber}. In Section~4 we will use Heegaard Floer homology and combinatorial group theory to determine all knots in $Y_n$ that have simple 
Floer homology. In Section~5 we will show the unlinks are
the only links in $S^3$ that have simple Floer homology.

\

\noindent{\bf Acknowledgements.}\quad This work was carried out
when the author participated the ``Homology Theories of Knots and
Links'' program at MSRI and when the author visited Princeton University. The author wishes to thank MSRI and David Gabai for their hospitality. The author was
partially supported by an AIM Five-Year Fellowship and NSF grant
number DMS-1021956.

\section{Sutured Floer homology and the homological action}

In this section, we will define the homological action on the sutured Floer homology and study its behavior under sutured manifold decompositions. 
We will assume the readers have some familiarity with sutured manifold theory and sutured Floer homology.

\subsection{The definition of the action}

\begin{defn}
A {\it sutured manifold} $(M,\gamma)$ is a compact oriented
3--manifold $M$ together with a set $\gamma\subset \partial M$ of
pairwise disjoint annuli. The
core of each component of $\gamma$ is a {\it suture}, and the
set of sutures is denoted by $s(\gamma)$.

Every component of $R(\gamma)=\partial M-\mathrm{int}(\gamma)$ is
oriented. Define $R_+(\gamma)$ (or $R_-(\gamma)$) to be the union
of those components of $R(\gamma)$ whose normal vectors point out
of (or into) $M$. The orientations on $R(\gamma)$ must be coherent
with respect to $s(\gamma)$.

A {\it balanced sutured manifold} is a sutured manifold $(M,\gamma)$ with $\chi(R_+(\gamma))=\chi(R_-(\gamma))$,
and $\gamma$ intersects each component of $\partial M$.
\end{defn}

Let $(M,\gamma)$ be a sutured manifold, and $S$ a properly
embedded surface in M. According to Gabai \cite{G1}, there is a natural way to put a sutured manifold structure on 
$M'=M\backslash\nu(S)$. This process is
a {\it sutured manifold decomposition}
$$(M,\gamma)\stackrel{S}{\rightsquigarrow}(M',\gamma').$$

\begin{defn}
A sutured manifold decomposition is {\it well-groomed}, if
for every component $V$ of $R(\gamma)$, $V\cap S$ is a union of parallel oriented
nonseparating simple closed curves or arcs.
\end{defn}

\begin{defn}
A {\it sutured manifold hierarchy} is a sequence of decompositions
$$(M_0,\gamma_0)\stackrel{S_0}{\rightsquigarrow}(M_1,\gamma_1)\stackrel{S_1}{\rightsquigarrow}(M_2,\gamma_2)\stackrel{S_2}{\rightsquigarrow}\cdots\stackrel{S_{n-1}}{\rightsquigarrow}(M_n,\gamma_{n}),
$$
such that $(M_n,\gamma_n)$ is a disjoint union of $(D^2\times I,(\partial D^2)\times I)$'s.
\end{defn}

A fundamental theorem of Gabai \cite{G1} says that for any taut sutured manifold, there exists a well-groomed sutured manifold hierarchy.

Suppose
$(\Sigma,\mbox{\boldmath$\alpha$},\mbox{\boldmath$\beta$})$
is a sutured Heegaard diagram for a balanced sutured manifold
$(M,\gamma)$. Let $\omega$ be a relative $1$--cycle on $\Sigma$,
such that it is in general position with the $\alpha$-- and
$\beta$--curves. Namely, $\omega=\sum k_ic_i$, where
$k_i\in\mathbb Z$, each $c_i$ is a properly immersed oriented
curve on $\Sigma$, such that $c_i$ is transverse to $\alpha$-- and $\beta$--curves,
and $c_i$ does not contain any intersection point of $\alpha$--
and $\beta$--curves.

We can also regard $\omega$ as a relative $1$--cycle representing
a class in $H_1(M,\partial M)$. On the other hand, any homology
class in $H_1(M,\partial M)$ can be represented by a
relative $1$--cycle on $\Sigma$.

If
$\phi$ is a topological Whitney disk connecting $\mathbf x$ to
$\mathbf y$, let $\partial_{\alpha}\phi=(\partial\phi)\cap\mathbb
T_{\alpha}$. We can also regard $\partial_{\alpha}\phi$ as a
multi-arc that lies on $\Sigma$ and connects $\mathbf x$ to
$\mathbf y$. Similarly, we define $\partial_{\beta}\phi$ as a
multi-arc connecting $\mathbf y$ to $\mathbf x$. We define
$$a(\omega,\phi)=\#\widehat{\mathcal M}(\phi)\:\big(\omega\cdot(\partial_{\alpha}\phi)\big),$$
where $\omega\cdot(\partial_{\alpha}\phi)$ is the algebraic
intersection number of $\omega$ and $\partial_{\alpha}\phi$. Let
$$A_{\omega}\co SFC(M,\gamma)\to SFC(M,\gamma)$$ be
defined as
$$A_{\omega}(\mathbf x)=\sum_{\mathbf y\in\mathbb T_{\alpha}\cap\mathbb T_{\beta}}
\sum_{\{\phi\in\pi_2(\mathbf x,\mathbf
y)|\mu(\phi)=1\}}a(\omega,\phi)\mathbf
y.$$
As in \cite[Lemma~4.18]{OSzAnn1}, $A_{\omega}$ is a chain map. The following lemma shows that it induces a well defined action of $H_1(M,\partial M;\mathbb Z)/\mathrm{Tors}$ on $SFH(M,\gamma)$.

\begin{lem}\label{lem:ActHomo}
Suppose $\omega_1,\omega_2\subset \Sigma$ are two relative $1$--cycles which are
homologous in $H_1(M,\partial M;\mathbb Z)/\mathrm{Tors}$, then
$A_{\omega_1}$ is chain homotopic to $A_{\omega_2}$.
\end{lem}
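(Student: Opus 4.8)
The plan mirrors the proof of \cite[Lemma~4.18]{OSzAnn1} in the closed case. First, since $\omega\cdot(\partial_\alpha\phi)$, and hence $a(\omega,\phi)$, is linear in $\omega$, we have $A_{\omega_1}-A_{\omega_2}=A_{\omega_1-\omega_2}$, and $\omega_1-\omega_2$ represents a torsion class in $H_1(M,\partial M;\mathbb Z)$. So it suffices to prove the following: if a relative $1$--cycle $\omega$ on $\Sigma$ represents a torsion class in $H_1(M,\partial M;\mathbb Z)$, then $A_\omega$ is chain homotopic to $0$.

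The heart of the matter is that, for such $\omega$, the integer $\omega\cdot(\partial_\alpha\phi)$ depends only on the endpoints $\mathbf x,\mathbf y$ of $\phi$, and not on the choice of $\phi\in\pi_2(\mathbf x,\mathbf y)$. Indeed, any two $\phi,\psi\in\pi_2(\mathbf x,\mathbf y)$ differ by a periodic class, so $\partial_\alpha\phi-\partial_\alpha\psi$ equals the $\alpha$--part of the boundary of a periodic domain $\mathcal P$; writing $\partial\mathcal P=\sum_i c_i\alpha_i+\sum_j d_j\beta_j$ we get $\partial_\alpha\phi-\partial_\alpha\psi=\sum_i c_i\alpha_i$. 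Recall that a periodic domain of a balanced sutured Heegaard diagram has multiplicity zero in every region meeting $\partial\Sigma$, so capping off the curves $\alpha_i$ and $\beta_j$ with the cores of the corresponding compressing disks in $M$ turns $\mathcal P$ into a closed $2$--cycle $S$ lying in the interior of $M$. After pushing $\omega$ slightly off $\Sigma$ to the $\alpha$--side, $\omega$ is disjoint from $\mathcal P$ and from the $\beta$--disks and crosses the disk bounded by $\alpha_i$ with algebraic multiplicity $\omega\cdot\alpha_i$; hence the algebraic intersection number of $\omega$ with $S$ in $M$ equals $\pm\sum_i c_i(\omega\cdot\alpha_i)$, where $\omega\cdot\alpha_i$ now denotes the intersection number on $\Sigma$. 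Since $[\omega]$ is torsion in $H_1(M,\partial M;\mathbb Z)$, it pairs to zero with every class of $H_2(M)$, in particular with $[S]$; therefore $\omega\cdot(\partial_\alpha\phi-\partial_\alpha\psi)=\sum_i c_i(\omega\cdot\alpha_i)=0$.

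Granting this, let $m(\mathbf x,\mathbf y)$ denote the common value of $\omega\cdot(\partial_\alpha\phi)$ over $\phi\in\pi_2(\mathbf x,\mathbf y)$, with $m(\mathbf x,\mathbf y)=0$ when $\pi_2(\mathbf x,\mathbf y)=\emptyset$. Since the domain of a juxtaposition of Whitney disks is the sum of their domains, $m$ is additive: $m(\mathbf x,\mathbf z)=m(\mathbf x,\mathbf y)+m(\mathbf y,\mathbf z)$ whenever $\pi_2(\mathbf x,\mathbf y)$ and $\pi_2(\mathbf y,\mathbf z)$ are nonempty. As any two generators in the same relative $\mathrm{Spin}^c$ structure are joined by a Whitney disk, on each relative $\mathrm{Spin}^c$ summand of $SFC(M,\gamma)$ we may write $m(\mathbf x,\mathbf y)=\ell(\mathbf x)-\ell(\mathbf y)$ for some function $\ell$ on generators. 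Let $L$ be the endomorphism of $SFC(M,\gamma)$ given by $L(\mathbf x)=\ell(\mathbf x)\mathbf x$. Since $\omega\cdot(\partial_\alpha\phi)=\ell(\mathbf x)-\ell(\mathbf y)$ is independent of $\phi$, collecting terms in the definition of $A_\omega$ yields
$$A_\omega(\mathbf x)=\ell(\mathbf x)\,\partial\mathbf x-L(\partial\mathbf x)=(\partial L-L\partial)(\mathbf x),$$
so $A_\omega=\partial L-L\partial$ is chain homotopic to $0$, which proves the lemma.

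I do not expect a serious obstacle. The point requiring the most care is in the second paragraph: that periodic domains of a balanced sutured Heegaard diagram vanish near $\partial\Sigma$ and hence cap off to closed cycles inside $M$, together with the identity $\omega\cdot S=\pm\sum_i c_i(\omega\cdot\alpha_i)$, since it is precisely here that the torsion hypothesis enters through the intersection pairing $H_1(M,\partial M;\mathbb Z)\times H_2(M)\to\mathbb Z$. (One could instead first prove, as in the closed case, that the chain homotopy class of $A_\omega$ depends only on the class of $\omega$ in $H_1(\Sigma,\partial\Sigma;\mathbb Z)$ --- the chain homotopy being multiplication by the local multiplicity of a $2$--chain bounding $\omega-\omega'$ --- and then reduce to relative $1$--cycles avoiding the $\alpha$-- or the $\beta$--curves; but the argument above establishes the lemma directly.)
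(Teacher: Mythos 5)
Your proof is correct, but it arrives at the chain homotopy by a genuinely different mechanism than the paper. Both arguments end with the same kind of homotopy operator: a diagonal map multiplying each generator by an integer-valued potential whose difference along any Whitney disk equals $(\omega_1-\omega_2)\cdot\partial_{\alpha}\phi$. The paper produces this potential geometrically: using the exact sequence of the triple $(M,\partial M,\gamma)$ and the presentation $H_1(M,\gamma)\cong H_1(\Sigma,\partial\Sigma)/([\alpha_i],[\beta_j])$, it constructs a relative $2$--chain $B'$ on $(\Sigma,\partial\Sigma)$ whose boundary (away from $\partial\Sigma$) is $m(\omega_2-\omega_1)$ plus copies of $\alpha$-- and $\beta$--curves and auxiliary curves $\xi,\eta$ disjoint from the $\alpha$--curves resp.\ $\beta$--curves, and takes $H(\mathbf x)=n_{\mathbf x}(B')/m$ after a divisibility adjustment (adding copies of $\Sigma$) to make this integral. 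You instead prove directly that $\omega\cdot\partial_{\alpha}\phi$ depends only on the endpoints: the difference of two domains is a periodic domain, periodic domains of a sutured diagram vanish on regions meeting $\partial\Sigma$ and hence cap off to classes in $H_2(M)$, and a torsion class in $H_1(M,\partial M;\mathbb Z)$ annihilates the bilinear intersection pairing with $H_2(M)$; the potential $\ell$ is then obtained by integrating the additive function $m(\mathbf x,\mathbf y)$ on each relative $\mathrm{Spin}^c$ class, which is legitimate because $\pi_2(\mathbf x,\mathbf y)\neq\emptyset$ exactly when the two generators represent the same relative $\mathrm{Spin}^c$ structure (a fact the paper also uses). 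Your route avoids the auxiliary curves $\xi,\eta$ coming from $H_1(\partial M,\gamma)$ and the division-by-$m$ bookkeeping, since the torsion hypothesis enters only through bilinearity of the pairing $H_1(M,\partial M)\times H_2(M)\to\mathbb Z$; the paper's route yields an explicit $2$--chain whose local multiplicities realize the homotopy, staying closer to the original argument of \cite{OSzAnn1}. The steps you leave implicit---that the difference of domains has boundary $\sum_i c_i\alpha_i+\sum_j d_j\beta_j$, that capped-off periodic domains represent classes in $H_2(M)$, and that the pushed-off copy of $\omega$ represents the same class in $H_1(M,\partial M)$---are routine, so I see no gap.
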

\begin{proof}
Since $\omega_1$ and $\omega_2$ are homologous in $H_1(M,\partial M;\mathbb Z
)/\mathrm{Tors}$, there exists a nonzero integer $m$ such that
$m[\omega_1]=m[\omega_2]\in H_1(M,\partial M;\mathbb Z)$. 

\noindent{\bf Claim.} There exists a relative $2$--chain $B$ in $(\Sigma,\partial\Sigma)$, such that
$(\partial B)\backslash(\partial \Sigma)$ consists of $m\omega_2$, $m(-\omega_1)$, copies of
$\alpha$--curves and $\beta$--curves, and proper curves $\xi,\eta\subset \Sigma$ such that $\xi$ is 
disjoint from $\alpha$--curves and $\eta$ is disjoint from $\beta$--curves.

Consider the triple $(M,\partial M,\gamma)$, we get an exact sequence
$$H_1(\partial M,\gamma)\to H_1(M,\gamma)\to H_1(M,\partial M)\to 0.$$
As a consequence, if $m[\omega_1]=m[\omega_2]\in H_1(M,\partial M)$, then there exists an element $c\in H_1(\partial M,\gamma)\cong H_1(R(\gamma),\partial R(\gamma))$, 
such that $c+m[\omega_2]-m[\omega_1]$ is homologous to zero in $H_1(M,\gamma)$. Let $\xi'\subset R_-(\gamma),\eta'\subset R_+(\gamma)$ be proper curves  
such that $\xi'+\eta'$ represents $c\in H_1(R(\gamma),\partial R(\gamma))$. Using the gradient flow of a Morse function associated with the sutured diagram, we can project $\xi',\eta'$ to 
proper curves $\xi,\eta\subset\Sigma$ such that $\xi$ is disjoint from $\alpha$--curves and $\eta$ is disjoint from $\beta$--curves. Then $\xi+\eta+m[\omega_2]-m[\omega_1]$ is homologous to zero in $H_1(M,\gamma)$.
Using the fact that
$$H_1(M,\gamma)\cong H_1(\Sigma,\partial\Sigma)/([\alpha_1]\dots,[\alpha_g],[\beta_1],\dots,[\beta_g]),$$
we conclude that there is a relative $2$--chain $B$ in $(\Sigma,\partial\Sigma)$, such that
$(\partial B)\backslash(\partial \Sigma)$ consists of $m\omega_2$, $m(-\omega_1)$, $\xi,\eta$, and copies of
$\alpha$--curves and $\beta$--curves. This finishes the proof of the claim.

Perturbing $B$ slightly, we get a
$2$--chain $B'$ such that
$$(\partial B')\backslash(\partial\Sigma)=m\omega_2-m\omega_1+\sum (a_i\alpha'_i+b_i\beta'_i)+\xi+\eta,$$
where $\alpha_i',\beta_i'$ are parallel copies of $\alpha_i,
\beta_i$.

Let $\phi$ be a topological Whitney disk connecting $\mathbf x$ to
$\mathbf y$. Since $\alpha_i',\xi$ are disjoint from all $\alpha$--curves, we have $\alpha_i'\cdot\partial_{\alpha}\phi=\xi\cdot\partial_{\alpha}\phi=0$.
Similarly,
$$\beta_i'\cdot\partial_{\alpha}\phi=-\beta_i'\cdot\partial_{\beta}\phi=0, \quad\eta\cdot\partial_{\alpha}\phi=0.$$
We have \begin{equation}\label{eq:Diffn} n_{\mathbf
x}(B')-n_{\mathbf y}(B')=-((\partial
B')\backslash(\partial\Sigma))\cdot\partial_{\alpha}\phi=m(\omega_1-\omega_2)\cdot\partial_{\alpha}\phi\in
m\mathbb Z.
\end{equation}

Pick an intersection point $\mathbf x_0$ representing a relative Spin$^c$
structure $\mathfrak s$. After adding copies of $\Sigma$ to $B'$,
we can assume that $n_{\mathbf x_0}(B')$ is divisible by $m$.
Since any two intersection points representing $\mathfrak s$ are
connected by a topological Whitney disk, (\ref{eq:Diffn}) implies
that $n_{\mathbf x}(B')$ is divisible by $m$ for any $\mathbf x$
representing $\mathfrak s$.

Now we define a map $H\co SFC(M,\gamma,\mathfrak s)\to
SFC(M,\gamma,\mathfrak s)$ by letting
$$H(\mathbf x)=\frac{n_{\mathbf x}(B')}m\mathbf x.$$
It follows from (\ref{eq:Diffn}) that
$$A_{\omega_1}-A_{\omega_2}=\partial\circ H-H\circ\partial.$$
Namely, $A_{\omega_1},A_{\omega_2}$ are chain homotopic.
\end{proof}

Now the same argument as in \cite[Lemma~4.17]{OSzAnn1} shows that $A_{\zeta}$ is a differential for any $\zeta\in H_1(M,\partial M)/\mathrm{Tors}$, hence $A$ gives rise to an action of 
$\Lambda^*(H_1(M,\partial M)/\mathrm{Tors})$ on $SFH(M,\gamma)$.

\subsection{Sutured manifold decomposition}

Suppose
$$(M,\gamma)\stackrel{S}{\rightsquigarrow}(M',
\gamma')
$$
is a well-groomed sutured manifold decomposition.
In \cite{Ju2}, Juh\'asz constructed a surface diagram 
$$(\Sigma,\mbox{\boldmath${\alpha}$},
\mbox{\boldmath$\beta$},P)$$
adapted to $S$, where $P\subset\Sigma$ is a compact surface with corner such that $P\cap (\partial\Sigma)$ consists of exactly the vertices of $P$.
Moreover, $$\partial P=A\cup B,$$ where $A,B$ are unions of edges of $P$ with $A\cap B=P\cap(\partial \Sigma)$, $A\cap\mbox{\boldmath$\beta$}=\emptyset,
B\cap\mbox{\boldmath$\alpha$}=\emptyset$. $(\Sigma,\mbox{\boldmath${\alpha}$},
\mbox{\boldmath$\beta$})$ is a balanced diagram for $(M,\gamma)$. A balanced diagram 
$$(\Sigma',\mbox{\boldmath${\alpha}$}',
\mbox{\boldmath$\beta$}')$$
for $(M',\gamma')$ can be constructed as follows:
$$\Sigma'=(\Sigma\backslash P)\cup P_A\cup P_B,$$
where $P_A,P_B$ are two copies of $P$, and $\Sigma\backslash P$ is glued to $P_A$ along $A$ while glued to $P_B$ along $B$. There is a natural projection map $p\co \Sigma'\to \Sigma$, and
$$\mbox{\boldmath${\alpha}$}'=p^{-1}(\mbox{\boldmath${\alpha}$}),\mbox{\boldmath${\beta}$}'=p^{-1}(\mbox{\boldmath${\beta}$}).$$

The decomposing surface $S$ can be seen from the surface diagram as follows: $M$ can be got from $\Sigma\times[0,1]$ by adding $2$--handles along $\alpha_i\times0$'s and $\beta_j\times1$'s. 
Then $S\subset M$ is isotopic to the surface \begin{equation}\label{eq:PtoS}(P\times\frac12)\cup(A\times[\frac12,1])\cup(B\times[0,\frac12]).
\end{equation}

Let $O_P$ be the set of intersection points in $\mathbb T_{\alpha}\cap\mathbb T_{\beta}$ that are supported outside of $P$. 
Then $O_P$ consists of the points whose associated relative Spin$^c$ structures are ``extremal" with respect to $S$. And $O_P$ is in one-to-one correspondence with $\mathbb T_{\alpha'}\cap\mathbb T_{\beta'}$.

Using techniques introduced by Sarkar and Wang \cite{SW}, Juh\'asz proved that the surface diagram can be made ``nice". In particular, if $\phi$ is a holomorphic disk connecting two points in $O_P$ with $\mu(\phi)=1$, 
then the domain of $\phi$ is either an embedded bigon or square. Moreover, the following fact was contained in the proof of \cite[Proposition~7.6]{Ju2}.

\begin{lem}\label{lem:DiskP}
Suppose $\mathcal D$ is the domain of a holomorphic disk connecting two points in $O_P$ with $\mu(\phi)=1$, and $C$ is a component of $\mathcal D\cap P$. Then $C$ is either a bigon or a square. 
If $C$ is a bigon, then $C$ has either an $\alpha$--edge and an $A$--edge, or a $\beta$--edge and a $B$--edge. If $C$ is a square, then $C$ has either two opposite $\alpha$--edges and two opposite $A$--edges, 
or two opposite $\beta$--edges and two opposite $B$--edges.\qed
\end{lem}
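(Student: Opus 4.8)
The plan is to reduce the statement to a purely local analysis of how the domain $\mathcal D$ of the holomorphic disk $\phi$ can intersect the subsurface $P$, using that the diagram has been made nice. First I would recall the two constraints the nice diagram imposes: since $\phi$ connects two points of $O_P$, which by construction are supported outside $P$, none of the multiplicities of $\mathcal D$ at the intersection points $\mathbf x,\mathbf y$ touch $P$; and since the diagram is nice, the full domain $\mathcal D$ is an embedded bigon or square (this is the input from Sarkar--Wang as used by Juh\'asz). So every component $C$ of $\mathcal D\cap P$ is an embedded subsurface of an embedded bigon or square, cut out by pieces of $\partial P$. Because $\partial P=A\cup B$ with $A$ disjoint from the $\beta$--curves and $B$ disjoint from the $\alpha$--curves, and $A\cap B=P\cap\partial\Sigma$ consists only of the corner points of $P$, the curves along which we cut are well controlled: an $A$--arc can only cross $\alpha$--curves, a $B$--arc can only cross $\beta$--curves.

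The key step is the corner/Euler-characteristic bookkeeping. The boundary of $C$ decomposes into $\alpha$--edges, $\beta$--edges, $A$--edges, and $B$--edges (an $A$--edge being a subarc of $A$, etc.), meeting at corners. I would classify the corners: a corner of $C$ is either a corner of the ambient bigon/square (hence an honest $90^\circ$ corner where an $\alpha$--edge meets a $\beta$--edge, contributing the usual amount to the Gauss--Bonnet count), or a point where $\partial P$ enters or exits the bigon/square, i.e. where an $A$-- or $B$--edge meets an $\alpha$-- or $\beta$--edge. Using that $A$ meets no $\beta$--curve and $B$ meets no $\alpha$--curve, at such a boundary crossing an $A$--edge can only abut an $\alpha$--edge, and a $B$--edge can only abut a $\beta$--edge. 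One also has to rule out corners of $P$ itself lying in the interior of $\mathcal D$: such a vertex of $P$ lies on $\partial\Sigma$, but the boundary of a nice domain does not run into $\partial\Sigma$ in the interior of the Heegaard surface in the relevant way; more precisely the points of $O_P$ and the disk domain stay in $\mathrm{int}(\Sigma)$ away from where $P$ touches $\partial\Sigma$. Granting this, each component $C$ is a disk (it sits inside an embedded disk and is cut out by embedded arcs, so no component can be an annulus or carry positive genus) whose boundary alternates between ``$\alpha$-or-$A$'' edges and ``$\beta$-or-$B$'' edges, and an $A$--edge is always flanked by $\alpha$--edges while a $B$--edge is always flanked by $\beta$--edges; the same alternation count that forces the whole domain to be a bigon or square forces $C$ to be a bigon or square. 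Finally I would note that if $C$ contains both an $A$--edge and a $B$--edge, the two would have to meet, but they only meet at vertices of $P$ on $\partial\Sigma$, which we have excluded; hence a bigon $C$ has an $\alpha$--edge and an $A$--edge, or a $\beta$--edge and a $B$--edge, and a square $C$ has two opposite $\alpha$--edges and two opposite $A$--edges, or two opposite $\beta$--edges and two opposite $B$--edges.

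The main obstacle I expect is the corner bookkeeping at the boundary of $P$: one must be careful that a component $C$ cannot, say, be a triangle with one $\alpha$--edge, one $\beta$--edge, and one $A$--edge, or a more complicated polygon, and that the angles at the new corners (where $\partial P$ crosses into the bigon/square) are genuinely "straight" $\pi$--angles rather than convex corners, so that only the original corners of the ambient bigon/square count toward the combinatorial Euler measure. This is exactly the point where the structure of Juh\'asz's adapted diagram — $P$ having corners only on $\partial\Sigma$, and $A,B$ avoiding $\beta$ and $\alpha$ respectively — does the work, and it is why the conclusion is the clean dichotomy stated. Since Juh\'asz already carries out this analysis inside the proof of \cite[Proposition~7.6]{Ju2}, I would cite that and present the above as an extraction of the relevant geometric fact, filling in the local picture of $C$ rather than reproving niceness of the diagram.
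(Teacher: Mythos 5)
Your overall route matches the paper's: the paper offers no independent argument for Lemma~\ref{lem:DiskP} at all --- it is stated with a \emph{qed} precisely because the fact is extracted from the proof of \cite[Proposition~7.6]{Ju2} --- and your proposal likewise rests, in the end, on citing that proof. The ingredients you isolate are the right ones: the generators lie in $O_P$, so the corners of the embedded bigon/square avoid $P$; the vertices of $P$ lie on $\partial\Sigma$, which the domain of a disk in sutured Floer homology avoids, so the arcs of $\partial P$ crossing $\mathcal D$ are pure $A$-- or pure $B$--arcs; $A\cap\mbox{\boldmath$\beta$}=\emptyset$ and $B\cap\mbox{\boldmath$\alpha$}=\emptyset$ force $A$--edges to abut only $\alpha$--edges and $B$--edges only $\beta$--edges; and cutting an embedded disk along disjoint properly embedded arcs yields disk components. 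These observations do show that each component $C$ is a disk whose boundary alternates $\alpha$/$A$ edges or alternates $\beta$/$B$ edges, and that the two types cannot mix.

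However, if your sketch is read as a standalone proof, the step ``the same alternation count that forces the whole domain to be a bigon or square forces $C$ to be a bigon or square'' is a genuine gap: alternation alone only says $C$ is a $2k$--gon with edges alternating between $\alpha$ and $A$ (or $\beta$ and $B$), and does not bound $k$ by $2$. Concretely, in a bigon domain with $\alpha$--edge $a$, three disjoint subarcs of $A$ crossing $\mathcal D$ with endpoints on $a$ (two side-by-side arcs lying under a third) cut out a hexagonal region with three $\alpha$--edges and three $A$--edges, and nothing in the local constraints you list ($A$ missing $\beta$, $B$ missing $\alpha$, corners of $P$ on $\partial\Sigma$, generators outside $P$) forbids that region from being a component of $\mathcal D\cap P$. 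Excluding such $2k$--gons with $k\ge 3$ is exactly where the finer structure of Juh\'asz's nice surface diagram adapted to $S$ enters, i.e.\ the analysis actually carried out inside the proof of \cite[Proposition~7.6]{Ju2}; so the citation is doing real mathematical work in your write-up and should not be presented as merely confirming a self-contained combinatorial count.
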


\begin{figure}
\begin{picture}(340,68)
\put(30,0){\scalebox{0.8}{\includegraphics*[0pt,0pt][350pt,
85pt]{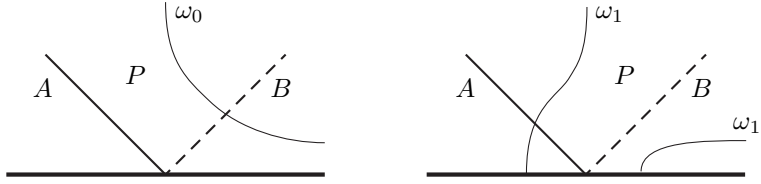}}}

\put(253,60){$\omega_1$}

\put(305,18){$\omega_1$}

\put(94,60){$\omega_0$}

\put(130,30){$B$}

\put(289,30){$B$}

\put(200,30){$A$}

\put(40,30){$A$}

\put(75,35){$P$}

\put(260,35){$P$}

\end{picture}
\caption{Changing $\omega_0$ near a corner of $P$}\label{fig:IntersectA}
\end{figure}

\begin{lem}\label{lem:IntersectA}
Suppose $\zeta\in H_1(M,\partial M)$, then $\zeta$ can be represented by a relative $1$--cycle $\omega\subset \Sigma$, such that $\omega$ intersects $\partial P$ in the interior of $A$. 
As a result, $\omega$ can be lifted to a relative $1$--cycle $\omega'\subset((\Sigma\backslash P)\cup_{A}P_A)\subset\Sigma'$, such that $p$ maps $\omega'$ homeomorphically to $\omega$, 
and $\omega'$ represents $i_*(\zeta)\in H_1(M',\partial M')$.
\end{lem}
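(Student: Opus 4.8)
The plan is to start with any relative $1$--cycle on $\Sigma$ representing $\zeta$ and then push its intersections with $\partial P$ off of $B$ and off the vertices, leaving it meeting $\partial P$ only in the interior of $A$. First I would recall that every class in $H_1(M,\partial M)$ is represented by some relative $1$--cycle $\omega_0 \subset \Sigma$ (this was noted right after the definition of $A_\omega$). After a small isotopy we may assume $\omega_0$ is transverse to $\partial P$ and misses the vertices of $P$, so $\omega_0 \cap \partial P$ is a finite set of points lying in the interiors of the $A$--edges and the $B$--edges. The goal is to remove the intersections with the $B$--edges. The key observation is that $B \cap \boldsymbol{\alpha} = \emptyset$, so an arc of $\omega_0$ that runs into $P$ through $B$ can be modified without affecting the homology class in $H_1(M,\partial M)$ and without creating new intersections with the $\alpha$--curves that matter for the action; what we actually want, though, is something purely topological, namely that $\omega_0$ can be homotoped rel endpoints on $\partial\Sigma$ so as to avoid $B$ entirely near its incursions into $P$.

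The mechanism I would use is the one suggested by Figure~\ref{fig:IntersectA}: near a corner $v = A \cap B$ of $P$, a strand of $\omega_0$ crossing a $B$--edge can be dragged, staying inside $\Sigma$ but running close to $\partial\Sigma$ around the vertex $v$, until it re-enters $P$ across the neighboring $A$--edge instead. Concretely, each point of $\omega_0 \cap B$ can be cancelled in pairs with a nearby point of $\omega_0 \cap A$ if they are endpoints of the same short sub-arc of $\omega_0$ lying in $P$ (replace that sub-arc together with the outgoing strand by a strand going around the corner), or, if a strand simply passes through $P$ entering and leaving through $B$, it can be pushed entirely out of $P$. Since $P \cap \partial\Sigma$ consists only of the vertices of $P$, there is room along $\partial\Sigma$ to perform these slides. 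The homology class in $H_1(\Sigma,\partial\Sigma)$, hence in $H_1(M,\partial M)$, is unchanged because each move is a homotopy of $\omega_0$ rel its intersection with $\partial\Sigma$ (or an isotopy supported in a disk neighborhood of a corner). Carrying this out for every intersection with $B$ yields the desired $\omega$ with $\omega \cap \partial P \subset \mathrm{int}(A)$.

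For the second assertion, recall $\Sigma' = (\Sigma \backslash P) \cup P_A \cup P_B$ with $P_A, P_B$ two copies of $P$, glued to $\Sigma \backslash P$ along $A$ and $B$ respectively, and the projection $p \co \Sigma' \to \Sigma$. Since $\omega$ never enters the interior of $P$ except across $A$, the portion of $\omega$ inside $P$ lies, after lifting, entirely in $P_A$, and the rest of $\omega$ lies in $\Sigma \backslash P$; thus $\omega$ lifts to a single curve $\omega' \subset (\Sigma \backslash P) \cup_A P_A \subset \Sigma'$ that $p$ maps homeomorphically to $\omega$. To see that $[\omega'] = i_*(\zeta)$ in $H_1(M', \partial M')$, I would use the description (\ref{eq:PtoS}) of $S$ inside $M$ built from $\Sigma \times [0,1]$ by handle attachments: the inclusion $i\co (M,\partial M) \to (M, (\partial M) \cup S)$ sends $\zeta$ to the class obtained by allowing $\omega$ to be pushed across the copy $A \times [\tfrac12,1]$ of part of $S$, and cutting $M$ along $S$ and reattaching replaces $A$ by the two copies in $P_A$; tracking this shows the lift built above represents exactly $i_*(\zeta)$ under the identification $H_1(M,(\partial M)\cup S) \cong H_1(M',\partial M')$.

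The main obstacle is the first step: making precise the "slide around a corner" move and checking that one can always eliminate intersections with $B$ while (a) staying on $\Sigma$, (b) not introducing new essential intersections, and (c) preserving the relative homology class. The subtlety is that $\omega$ is a relative $1$--cycle, so its endpoints on $\partial\Sigma$ may themselves be near corners of $P$, and one must organize the slides so that the combinatorics near each vertex $v = A \cap B$ is handled coherently; the figure indicates the local picture, and the global argument is just a finite iteration, but writing it carefully is where the real work lies.
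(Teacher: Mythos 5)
Your proposal follows essentially the same route as the paper: trade each intersection of a representative with $B$ for one with $A$ (or none) by the local move near a corner of $P$ shown in Figure~\ref{fig:IntersectA}, which preserves the class in $H_1(\Sigma,\partial\Sigma)$ precisely because the corners lie on $\partial\Sigma$; then lift $\omega$ to $(\Sigma\backslash P)\cup_A P_A$ and use the description (\ref{eq:PtoS}) of $S$, noting that disjointness from $B$ makes $\omega\times(\tfrac12-\epsilon)$ disjoint from $S$, so it survives cutting along $S$ and represents $i_*(\zeta)$. The corner move you worry about is exactly the paper's Figure~2 step (a modification of the relative cycle supported in a disk around a vertex, not an interior homotopy), and your argument is at the same level of detail as the paper's.
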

\begin{proof}
Let $\omega_0\subset \Sigma$ be a relative $1$--cycle representing $\zeta$, such that $\omega_0$ intersects $\partial P$ transversely in the interior of the edges. 
Suppose $\omega_0$ has an intersection point with $B$. After homotoping $\omega_0$, we may assume this intersection point is near a corner of  $P$. As in Figure~\ref{fig:IntersectA}, 
we can replace $\omega_0$ by a new relative $1$--cycle $\omega_1$, such that $[\omega_1]=[\omega_0]\in H_1(\Sigma,\partial\Sigma)$, and $\#(\omega_1\cap B)=\#(\omega_0\cap B)-1$. 
Continuing this process, we get a relative $1$--cycle $\omega$ representing $\zeta$, which does not intersect $B$.

Cutting $\Sigma$ open along $B$, we get a surface homeomorphic to $(\Sigma\backslash P)\cup_{A}P_A$. Since $\omega$ does not intersect $B$, it lies in the new surface. Hence there is a corresponding relative
$1$--cycle $\omega'\subset(\Sigma\backslash P)\cup_{A}P_A$, such that $p$ maps $\omega'$ homeomorphically to $\omega$. 

$S$ is isotopic to a surface (\ref{eq:PtoS}). Since $\omega$ is disjoint from $B$, $\omega\times(\frac12-\epsilon)$ is disjoint from $S$. Cutting $M$ open along $S$, 
$\omega\times(\frac12-\epsilon)$ becomes the relative $1$--cycle $\omega'\times(\frac12-\epsilon)$. Hence $\omega'$ represents $i_*(\zeta)$ in $H_1(M',\partial M')$.
\end{proof}

Now we are ready to prove Theorem~\ref{thm:Decomp}

\begin{proof}[Proof of Theorem~\ref{thm:Decomp}]
The projection map $p\co \Sigma'\to\Sigma$ induces a bijection $$p_*\co\mathbb T_{\alpha'}\cap\mathbb T_{\beta'}\to O_P,$$ which then induces the inclusion map
$$SFC(M',\gamma')\to SFC(M,\gamma).$$
As argued in \cite{Ju2}, $p$ induces a one-to-one correspondence between the holomorphic disks for $SFC(M',\gamma')$ and the holomorphic disks for the chain complex generated by $O_P$. Let $\omega$ be the curve obtained in Lemma~\ref{lem:IntersectA}.
Suppose $\phi$ is a holomorphic disk connecting two points in $O_P$, $\phi'$ is the corresponding holomorphic disk connecting two points in $\mathbb T_{\alpha'}\cap\mathbb T_{\beta'}$. 
The intersection points of $\partial_{\alpha}\phi$ and $\omega$ outside of $P$ are in one-to-one correspondence with the  intersection points of  $\partial_{\alpha}\phi'$ and $\omega'$ outside of $P_A\cup P_B$.  

Let $C$ be a component of $\mathcal D\cap P$, where $\mathcal D$ is the domain of $\phi$. Let $C'$ be the corresponding component of $\mathcal D'\cap(P_A\cup P_B)$.
By Lemma~\ref{lem:DiskP}, $C\cap(\partial P)$ is contained in either $A$ or $B$. If $(C\cap(\partial P))\subset A$, then $C'\subset P_A$, and $(\partial_{\alpha}C)\cap\omega$ is in one-to-one correspondence 
with $(\partial_{\alpha}C')\cap\omega'$. If $(C\cap(\partial P))\subset B$, then $C'\subset P_B$, so $(\partial_{\alpha}C')\cap\omega'=\emptyset$. By Lemma~\ref{lem:DiskP}, in this case $C$ has no $\alpha$--edge, 
so $(\partial_{\alpha}C)\cap\omega=\emptyset$. This shows that
$$\omega\cdot(\partial_{\alpha}\phi)=\omega'\cdot(\partial_{\alpha}\phi').$$
Now our desired result follows from the definition of the homological action.
\end{proof}

\subsection{The homological action on Knot Floer homology}

The material in this subsection is not used in this paper. However, it is helpful to have in mind the symmetry stated in Proposition~\ref{prop:Symm}.

Suppose $K$ is a null-homologous knot in a closed $3$--manifold
$Y$. Let $$(\Sigma,\mbox{\boldmath${\alpha}$},
\mbox{\boldmath$\beta$},w,z)$$ be a doubly pointed Heegaard
diagram for $(Y,K)$ which is induced from a marked Heegaard
diagram. Fix a Spin$^c$ structure $\mathfrak s$ on $Y$ and let
$\xi\in\underline{\mathrm{Spin}}^c(Y,K)$ be a relative Spin$^c$
structure which extends it. Let $CFK^{\infty}(Y,K,\xi)$ be an
abelian group freely generated by triples $[\mathbf x,i,j]$ with
$$\mathbf x\in\mathbb T_{\alpha}\cap\mathbb T_{\beta},\quad\mathfrak
s_w(\mathbf x)=\mathfrak s$$ and $$\underline{\mathfrak
s}_m(\mathbf x)+(i-j)PD[\mu]=\xi.$$ The chain complex is endowed
with the differential
$$\partial^{\infty}[\mathbf x,i,j]=\sum_{\mathbf y\in\mathbb T_{\alpha}\cap\mathbb T_{\beta}}
\sum_{\{\phi\in\pi_2(\mathbf x,\mathbf
y)|\mu(\phi)=1\}}\#(\widehat{\mathcal M}(\phi))[\mathbf
y,i-n_w(\phi),j-n_z(\phi)].$$ The homology of
$(CFK^{\infty}(Y,K,\xi),\partial^{\infty})$ is denoted
$HFK^{\infty}(Y,K,\xi)$.

Suppose $\omega$ is a $1$--cycle on $\Sigma$. Let $$A_{\omega}[\mathbf x,i,j]=\sum_{\mathbf y\in\mathbb T_{\alpha}\cap\mathbb T_{\beta}}
\sum_{\{\phi\in\pi_2(\mathbf x,\mathbf
y)|\mu(\phi)=1\}}a(\omega,\phi)[\mathbf
y,i-n_w(\phi),j-n_z(\phi)].$$
As in \cite{OSzAnn1} and the arguments before, $A_{\omega}$ induces an action
of $\Lambda^*H_1(Y;\mathbb Z)/\mathrm{Tors}$ on
$HFK^{\infty}(Y,K,\xi)$.

There is a $U$--action on $CFK^{\infty}(Y,K,\xi)$ given by
$$U[\mathbf x,i,j]=[\mathbf x,i-1,j-1].$$
 Let
$CFK^{-,*}(Y,K,\xi)$ be the subcomplex of $CFK^{\infty}(Y,K,\xi)$
generated by $[\mathbf x,i,j]$ with $i<0$, and let
$CFK^{+,*}(Y,K,\xi)$ be its quotient complex. Moreover, let
$CFK^{0,*}(Y,K,\xi)\subset CFK^{+,*}(Y,K,\xi)$ be the kernel of
the induced $U$--action. The grading $j$ gives a filtration on
$CFK^{0,*}(Y,K,\xi)$, the associated graded complex is denoted
$\widehat{CFK}(Y,K,\xi)$. There are induced actions of
$A_{\omega}$ on the above complexes, and the actions induce
differentials $A_{[\omega]}$ on the corresponding homology groups.

When $\mathfrak s$ is a torsion Spin$^c$ structure over $Y$, as in
Ozsv\'ath--Szab\'o \cite{OSzKnot} there is an absolute $\mathbb
Q$--grading on $CFK^{\infty}(Y,K,\xi)$ and the induced complexes.
Let $\widehat{HFK}_d(Y,K,\xi)$ be the summand of
$\widehat{HFK}(Y,K,\xi)$ at the absolute grading $d$.

\begin{prop}\label{prop:Symm}
Let $\mathfrak s$ be a torsion Spin$^c$ structure over $Y$, and
let $\xi\in\underline{\mathrm{Spin}^c}(Y,K)$ be a relative
Spin$^c$ structure which extends $\mathfrak s$. Let $\zeta$ be
a homology class in $H_1(Y;\mathbb Z)/\mathrm{Tors}$. Then there
is an isomorphism
$$f\co\widehat{HFK}_d(Y,K,\xi)\stackrel{\cong}{\longrightarrow}\widehat{HFK}_{d-2m}(Y,K,J\xi),$$
such that the following diagram is commutative: $$\begin{CD}
\widehat{HFK}_d(Y,K,\xi) @>f>> \widehat{HFK}_{d-2m}(Y,K,J\xi)\\
@VVA_{\zeta}V @VVA_{-\zeta}V\\
\widehat{HFK}_d(Y,K,\xi) @>f>> \widehat{HFK}_{d-2m}(Y,K,J\xi),
\end{CD}$$
 where $2m=\langle
c_1(\xi),[\widehat{F}]\rangle$ for any Seifert surface $F$ for
$K$.
\end{prop}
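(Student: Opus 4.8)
The plan is to build the isomorphism $f$ from the conjugation symmetry of knot Floer homology, and then check that it intertwines $A_\zeta$ with $A_{-\zeta}$ by tracking what conjugation does to the data $(\omega,\phi)$ entering the definition of the action. Recall that the conjugation symmetry of Ozsv\'ath--Szab\'o is realized on the level of Heegaard diagrams by passing from $(\Sigma,\boldsymbol{\alpha},\boldsymbol{\beta},w,z)$ to $(\Sigma,\boldsymbol{\beta},\boldsymbol{\alpha},w,z)$, i.e.\ reversing the orientation of $\Sigma$ and swapping the roles of the $\alpha$-- and $\beta$--curves; this takes a generator $\mathbf x$ for $\xi$ to the same set of points, now viewed as a generator for $J\xi$, and sends a Whitney disk $\phi\in\pi_2(\mathbf x,\mathbf y)$ to a Whitney disk $\bar\phi\in\pi_2(\mathbf y,\mathbf x)$ on the conjugate diagram with $n_w(\bar\phi)=n_w(\phi)$, $n_z(\bar\phi)=n_z(\phi)$, $\mu(\bar\phi)=\mu(\phi)$, and the same count of holomorphic representatives. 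First I would recall from \cite{OSzKnot} that this induces the isomorphism $\widehat{HFK}_d(Y,K,\xi)\cong\widehat{HFK}_{d-2m}(Y,K,J\xi)$ with $2m=\langle c_1(\xi),[\widehat F]\rangle$, the grading shift being exactly the standard one for the conjugation symmetry on a torsion $\mathrm{Spin}^c$ structure.

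Next I would examine the effect of conjugation on $a(\omega,\phi)=\#\widehat{\mathcal M}(\phi)\,\bigl(\omega\cdot\partial_\alpha\phi\bigr)$. The subtle point is that $\partial_\alpha\phi$ becomes $\partial_\alpha\bar\phi$, but on the conjugate diagram the role of "$\alpha$'' is played by the old $\beta$--curves; so what was $\partial_\alpha\phi$ turns into (a piece of) $\partial_\beta$ on the new diagram. Using $\partial\phi=\partial_\alpha\phi\cup\partial_\beta\phi$ with $\partial_\alpha\phi$ running from $\mathbf x$ to $\mathbf y$ and $\partial_\beta\phi$ from $\mathbf y$ to $\mathbf x$, together with the reversal of the orientation of $\Sigma$, one finds $\omega\cdot(\partial_\alpha\bar\phi)=-\,\omega\cdot(\partial_\alpha\phi)$ — the orientation reversal of the surface flips the sign of every algebraic intersection number, while the bookkeeping of which boundary arc is labelled $\alpha$ versus $\beta$ accounts for an additional compensating sign, and the net effect is a single sign change. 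Hence $a(\omega,\bar\phi)=-\,a(\omega,\phi)=a(-\omega,\phi)$, so that under the identification of chain groups given by conjugation one has $A_\omega\leftrightarrow A_{-\omega}$, which is exactly the assertion that $f$ conjugates $A_\zeta$ into $A_{-\zeta}$.

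I would then assemble these observations: define $f$ on homology as the conjugation isomorphism, note that it is grading-compatible with the stated shift, and observe that the chain-level computation of the previous paragraph shows $f\circ A_\zeta$ and $A_{-\zeta}\circ f$ agree (after passing to the appropriate subquotient complex $\widehat{CFK}$, which is respected by conjugation since conjugation fixes $w$ and $z$ and hence the $i$-- and $j$--filtrations up to the known relabelling). This yields the commutative square. The main obstacle I anticipate is pinning down the sign in $\omega\cdot(\partial_\alpha\bar\phi)=-\,\omega\cdot(\partial_\alpha\phi)$ cleanly: one must be careful that reversing the orientation of $\Sigma$ changes intersection signs, that relabelling $\alpha\leftrightarrow\beta$ swaps $\partial_\alpha\phi$ and $\partial_\beta\phi$, and that $\partial_\alpha\phi+\partial_\beta\phi$ is a cycle rel endpoints, so that $\omega\cdot\partial_\alpha\phi$ and $\omega\cdot\partial_\beta\phi$ are related; assembling these three facts to get exactly one net sign, rather than zero or two, is the delicate bookkeeping step, and it is best done by a small local model near a single intersection point of $\omega$ with the boundary of the disk's domain, in the spirit of \cite[Lemma~4.18]{OSzAnn1}.
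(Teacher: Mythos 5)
Your proposal is correct and follows essentially the same route as the paper: the paper also realizes $f$ via the conjugate diagram $(-\Sigma,\mbox{\boldmath$\beta$},\mbox{\boldmath$\alpha$},z,w)$ (note that the basepoints are swapped and the induced disk $\overline\phi$ again connects $\mathbf x$ to $\mathbf y$, rather than lying in $\pi_2(\mathbf y,\mathbf x)$ as you wrote) and proves exactly your sign identity, $\omega\cdot\partial_{\alpha}\phi|_{\Sigma}=-\omega\cdot\partial_{\beta}\overline{\phi}|_{-\Sigma}$, so that $A_{\omega}$ corresponds to $A_{-\omega}$. The only extra content in the paper is an explicit derivation of the grading shift $2m$, carried out inside $CFK^{\infty}(Y,K,\xi_0)$ using multiplication by $U^m$, which you instead quote from \cite{OSzKnot}.
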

\begin{proof}
Let $$\Gamma_1=(\Sigma,\mbox{\boldmath${\alpha}$},
\mbox{\boldmath$\beta$},w,z)$$ be a doubly pointed Heegaard
diagram for $(Y,K)$. Then
$$\Gamma_2=(-\Sigma,\mbox{\boldmath$\beta$},\mbox{\boldmath${\alpha}$},
z,w)$$ is also a Heegaard diagram for $(Y,K)$. If $\mathbf
x\in\mathbb T_{\alpha}\cap\mathbb T_{\beta}$ represents $\mathfrak
s$ in $\Gamma_1$, then $\mathbf x$ represents $J\mathfrak s$ in
$\Gamma_2$. If $\phi$ is a holomorphic disk in $\Gamma_1$
connecting $\mathbf x$ to $\mathbf y$, then $\phi$ gives rise to a
holomorphic disk $\overline{\phi}$ in $\Gamma_2$ connecting
$\mathbf x$ to $\mathbf y$. Topologically, $\overline{\phi}$ is
just $-\phi$. Let $\omega\subset \Sigma$ be a curve representing
$\zeta$, then
\begin{equation}\label{eq:-omega}
\omega\cdot\partial_{\alpha}\phi|_{\Sigma}=-\omega\cdot\partial_{\beta}\phi|_{\Sigma}
=-\omega\cdot\partial_{\beta}\overline{\phi}|_{-\Sigma},
\end{equation}
 where
the notation $|_{\Sigma}$ or $|_{-\Sigma}$ implies that the
intersection number is evaluated in $\Sigma$ or $-\Sigma$.

Since $\mathfrak s$ is torsion, there is a unique
$\xi_0\in\underline{\mathrm{Spin}^c}(Y,K)$ extending $\mathfrak s$
which satisfies
$$\langle c_1(\xi_0),[\widehat F]\rangle=0$$
for any Seifert surface $F$ for $K$. Using the observations in the
first paragraph, it follows that if we interchange the roles of
$i$ and $j$, then the chain complex $CFK^{\infty}(Y,K,\xi_0)$ can
be viewed as the chain complex $CFK^{\infty}(Y,K,J\xi_0)$. It
follows that there is a grading preserving isomorphism
$$CFK^{-m,0}(Y,K,\xi_0)\cong
CFK^{0,-m}(Y,K,J\xi_0)\cong\widehat{CFK}(Y,K,J\xi).
$$
Moreover, the map $U^m$ induces an isomorphism
$$U^m\co \widehat{CFK}(Y,K,\xi)=CFK^{0,m}(Y,K,\xi_0)\to CFK^{-m,0}(Y,K,\xi_0)$$
which decreases the absolute grading by $2m$. Hence
$$\widehat{CFK}_d(Y,K,\xi)\cong\widehat{CFK}_{d-2m}(Y,K,J\xi).$$

Using (\ref{eq:-omega}), we find that the action of $A_{\omega}$
on $\widehat{CFK}_d(Y,K,\xi)$ corresponds to the action of
$A_{-\omega}$ on $\widehat{CFK}_{d-2m}(Y,K,J\xi)$.
\end{proof}

\section{The contact invariant $EH(M,\gamma,\xi)$}

Suppose $(M,\gamma)$ is a sutured manifold. A contact structure on $(M,\gamma)$ is a contact structure $\xi$ on $M$, such that $\partial M$ is convex and the suture $s(\gamma)$ is the dividing set. Suppose 
$$(M,\gamma)\stackrel{S}{\rightsquigarrow}(M',\gamma')$$ is a taut decomposition, and $\xi$ is a contact structure on $(M,\gamma)$ such that $S$ is convex and the dividing set $\gamma_S$ on $S$ is $\partial$--parallel, namely, each component of $\gamma_S$ cuts off a disk containing no other component of $\gamma_S$. Let $\xi'$ be the restriction of $\xi$ on $(M',\gamma')$. Then $\xi'$ is tight if and only if $\xi$ is tight \cite{HKM0}.

\begin{defn}
Suppose $(M,\gamma)$ is a taut sutured manifold. A tight contact structure $\xi$ on $(M,\gamma)$ is of {\it hierarchy type}, if there exists a well-groomed sutured manifold hierarchy
\begin{equation}\label{eq:SutHier}
(M,\gamma)=(M_0,\gamma_0)\stackrel{S_0}{\rightsquigarrow}(M_1,\gamma_1)\stackrel{S_1}{\rightsquigarrow}(M_2,\gamma_2)\stackrel{S_2}{\rightsquigarrow}\cdots\stackrel{S_{n-1}}{\rightsquigarrow}(M_n,\gamma_{n}),
\end{equation}
such that the dividing set on each $S_i$ is $\partial$--parallel. In fact, since $M_n$ consists of balls, $\xi$ is obtained by gluing the unique tight contact structure on $M_n$ along the decomposing surfaces.
\end{defn}

For a contact structure $\xi$ on $(M,\gamma)$, Honda, Kazez and Mati\'c \cite{HKM} defined an invariant $EH(M,\gamma,\xi)\in SFH(-M,-\gamma)/(\pm1)$. They also studied the behavior of this invariant under 
sutured manifold decomposition.

\begin{thm}[Honda--Kazez--Mati\'c]\label{thm:HKM}
Let $(M,\gamma,\xi)$ be the contact structure obtained from $(M',\gamma',\xi')$ by gluing along a $\partial$--parallel $(S,\gamma_S)$. 
Under the inclusion of $SFH(-M',-\gamma')$ into $SFH(-M,-\gamma)$ as a direct summand, $EH(M',\gamma',\xi')$ is mapped to $EH(M,\gamma,\xi)$.
\end{thm}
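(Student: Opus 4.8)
The plan is to follow Honda--Kazez--Mati\'c \cite{HKM}: build a single sutured Heegaard diagram that is simultaneously adapted to $S$ in Juh\'asz's sense — so that it realizes the inclusion $\iota\co SFH(-M',-\gamma')\hookrightarrow SFH(-M,-\gamma)$ — and compatible with a partial open book decomposition for $\xi$, and then identify the distinguished generators on the two sides. One works throughout with $(-M,-\gamma)$; decomposing $(M,\gamma)$ along $S$ is the same as decomposing $(-M,-\gamma)$ along $S$ with the opposite orientation, so the machinery of Theorem~\ref{thm:Decomp} applies verbatim, and after a preliminary isotopy we may assume the decomposition is well-groomed, so that the adapted diagram $(\Sigma,\alpha,\beta,P)$ and the bijection $p_*\co\mathbb T_{\alpha'}\cap\mathbb T_{\beta'}\to O_P$ are available.

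Recall that $EH$ is defined as follows: a contact structure $\xi$ on $(M,\gamma)$ determines, after isotopy, a partial open book decomposition, which yields a sutured Heegaard diagram for $(-M,-\gamma)$ together with a distinguished intersection point $\mathbf x(\xi)\in\mathbb T_\alpha\cap\mathbb T_\beta$, one chosen point on each basis arc; by definition $EH(M,\gamma,\xi)=[\mathbf x(\xi)]\in SFH(-M,-\gamma)/(\pm 1)$, independent of all choices.

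The geometric input is that the $\partial$--parallel hypothesis lets one build the passage from $(M',\gamma',\xi')$ to $(M,\gamma,\xi)$ from elementary pieces: each disk that a component of $\gamma_S$ cuts off on $S$ contributes a contact $2$--handle, while the complementary product pieces of a product neighborhood of $S$ contribute contact $1$--handles (and product regions, which do not change $SFH$). So I would first treat the two elementary cases directly from the partial open book model — attaching a contact $2$--handle (along a curve meeting $s(\gamma)$ in two points) and attaching a contact $1$--handle — where the partial open book is enlarged by a handle whose new page region is a disk, and the induced change on the Heegaard diagram is local enough that the distinguished generator of the enlarged manifold visibly corresponds to the old one. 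Then I would assemble: starting from a partial open book for $(M',\gamma',\xi')$, enlarge it by the handles prescribed by the decomposition along $S$, arranging the result to be an adapted diagram $(\Sigma,\alpha,\beta,P)$ for $(M,\gamma)\stackrel{S}{\rightsquigarrow}(M',\gamma')$ with $(\Sigma',\alpha',\beta')$ the diagram of the partial open book of $\xi'$. It then remains to check that $\mathbf x(\xi)$ represents the relative Spin$^c$ structure that is extremal with respect to $S$, hence $\mathbf x(\xi)\in O_P$, and that under $p_*$ it corresponds to $\mathbf x(\xi')$; both hold because the added handles are trivial transverse to $S$ and so do not disturb the product-like part of the partial open book that carries the distinguished generator. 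Since $p_*$ induces $\iota$, this gives $\iota\big(EH(M',\gamma',\xi')\big)=EH(M,\gamma,\xi)$.

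The step I expect to be the main obstacle is the reconciliation of the two frameworks in the third paragraph: checking that a $\partial$--parallel decomposition along $S$ really is realized by the claimed contact handle attachments, with the claimed effect on page and monodromy, and that the enlarged partial open book produces a diagram satisfying Juh\'asz's axioms for a diagram adapted to $S$, with $\partial P=A\cup B$ separating the $\alpha$-- and $\beta$--sides. This is a genuine bridge between contact geometry and the combinatorics of sutured Heegaard diagrams; once it is in place, placing $\mathbf x(\xi)$ in $O_P$, identifying it with $\mathbf x(\xi')$, and drawing the conclusion are bookkeeping on top of the results of \cite{Ju2} recalled above. Honda's tightness-under-gluing statement and any nonvanishing of $EH$ play no role in this theorem, only in its applications.
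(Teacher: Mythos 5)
First, a point of orientation: the paper does not prove this statement at all. It is quoted, with attribution, as a theorem of Honda--Kazez--Mati\'c \cite{HKM}; the surrounding text only records how it will be used (to deduce Corollary~\ref{cor:EH} via Theorem~\ref{thm:Decomp}). So there is no in-paper proof to compare yours against, and the relevant benchmark is the original argument in \cite{HKM}. Measured against that, your outline does follow the right general strategy: realize a partial open book for $\xi$ extending one for $\xi'$ on a sutured Heegaard diagram adapted to $S$ in Juh\'asz's sense, check that the distinguished generator $\mathbf x(\xi)$ lies in the outer set $O_P$, and identify it with $\mathbf x(\xi')$ under the bijection $p_*$ inducing $\iota$.

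However, as a proof the proposal has a genuine gap, and you have located it yourself: the ``reconciliation of the two frameworks'' in your third paragraph is not bookkeeping on top of \cite{Ju2} --- it \emph{is} the theorem. That a $\partial$--parallel gluing is realized by contact handle attachments with the stated effect on the partial open book, and that the enlarged open book can be arranged to yield a diagram $(\Sigma,\mbox{\boldmath$\alpha$},\mbox{\boldmath$\beta$},P)$ satisfying Juh\'asz's axioms (with $\partial P=A\cup B$, $A\cap\mbox{\boldmath$\beta$}=\emptyset$, $B\cap\mbox{\boldmath$\alpha$}=\emptyset$) while the restriction to $(\Sigma',\mbox{\boldmath$\alpha$}',\mbox{\boldmath$\beta$}')$ is a partial open book diagram for $\xi'$, is precisely the technical content of \cite{HKM}, and nothing in your sketch supplies it; the two ``elementary cases'' are asserted, not verified. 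Two subsidiary assertions also need justification. You cannot simply declare the decomposition well-groomed ``after a preliminary isotopy'': well-groomedness is a condition on $S\cap R(\gamma)$, and the isotopies available to you are constrained because $S$ must stay convex with $\partial$--parallel dividing set $\gamma_S$, which is what ties $S$ to $\xi$; in any case what is needed here is only that the decomposition be one to which Juh\'asz's direct-summand theorem applies, which should be argued rather than replaced by a stronger unproved claim. And the statement that $\mathbf x(\xi)$ represents the $S$--extremal relative $\mathrm{Spin}^c$ structure ``because the added handles are trivial transverse to $S$'' is exactly the kind of claim that requires the explicit diagram; moreover, since $\iota$ is defined through a particular adapted diagram and $EH$ through a particular open book, identifying the two also uses an invariance statement, not merely the existence of one convenient diagram. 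In short: right strategy, consistent with how this result is proved in the literature, but the decisive steps are named rather than carried out, so it does not yet constitute a proof.
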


Now Theorem~\ref{thm:Decomp} immediately implies the following result.

\begin{cor}\label{cor:EH}
Suppose $(M,\gamma)$ is a taut sutured manifold and $\xi$ is a contact structure of hierarchy type on $M$. Then for any $\zeta\in H_1(M,\partial M;\mathbb Z)/\mathrm{Tors}$, $EH(M,\gamma,\xi)$ lies in the kernel of $A_{\zeta}$ 
while $EH(M,\gamma,\xi)$ is not contained in the image of $A_{\zeta}$.
\end{cor}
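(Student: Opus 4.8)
The plan is to reduce Corollary~\ref{cor:EH} to the decomposition formula of Theorem~\ref{thm:Decomp} together with the naturality statement of Theorem~\ref{thm:HKM}, by induction along the well-groomed hierarchy witnessing that $\xi$ is of hierarchy type. Write the hierarchy as in \eqref{eq:SutHier}, so that $(M_n,\gamma_n)$ is a union of balls carrying the unique tight contact structure $\xi_n$, and each $\xi_i$ on $(M_i,\gamma_i)$ is obtained from $\xi_{i+1}$ by gluing along the $\partial$--parallel $(S_i,\gamma_{S_i})$. First I would record the base case: on a ball, $SFH$ has rank one, so $EH(M_n,\gamma_n,\xi_n)$ is a generator; the map $A_\zeta$ is zero for every $\zeta$ (indeed $H_1(M_n,\partial M_n)=0$), so $EH$ is trivially in $\ker A_\zeta$ and, since $A_\zeta=0$ while $EH\neq 0$, it is not in the image of $A_\zeta$.

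For the inductive step, fix $\zeta\in H_1(M,\partial M)/\mathrm{Tors}$. Pushing $\zeta$ forward along the inclusions $i_*$ at each stage of the hierarchy produces a compatible family of classes $\zeta_i\in H_1(M_i,\partial M_i)/\mathrm{Tors}$, with $\zeta_0=\zeta$ and $\zeta_{i+1}=(i_*)(\zeta_i)$. By Theorem~\ref{thm:HKM}, under the inclusion $\iota_i\co SFH(-M_{i+1},-\gamma_{i+1})\hookrightarrow SFH(-M_i,-\gamma_i)$ as a direct summand we have $\iota_i\big(EH(M_{i+1},\gamma_{i+1},\xi_{i+1})\big)=EH(M_i,\gamma_i,\xi_i)$. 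By Theorem~\ref{thm:Decomp} applied to the decomposition $(-M_i,-\gamma_i)\rightsquigarrow(-M_{i+1},-\gamma_{i+1})$ (note that well-groomedness is preserved when passing to the reverse orientation, and the decomposing surfaces $S_i$ used here are the ones given by the hierarchy), the maps $\iota_i,\pi_i$ intertwine $A_{\zeta_i}$ and $A_{\zeta_{i+1}}$: precisely $\iota_i\circ A_{\zeta_{i+1}}=A_{\zeta_i}\circ\iota_i$ and $A_{\zeta_{i+1}}\circ\pi_i=\pi_i\circ A_{\zeta_i}$. Composing the $\iota_i$'s from $i=n-1$ down to $i=0$ gives an inclusion $\iota\co SFH(-M_n,-\gamma_n)\hookrightarrow SFH(-M,-\gamma)$ as a direct summand, carrying a generator to $EH(M,\gamma,\xi)$, and satisfying $\iota\circ A_{\zeta_n}=A_{\zeta}\circ\iota$ and $A_{\zeta_n}\circ\pi=\pi\circ A_\zeta$, where $\pi$ is the corresponding composite of projections with $\pi\circ\iota=\mathrm{id}$.

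Now the conclusion is immediate. Let $e\in SFH(-M_n,-\gamma_n)$ be the generator with $\iota(e)=EH(M,\gamma,\xi)$. Since $H_1(M_n,\partial M_n)=0$ we have $A_{\zeta_n}=0$, hence $A_\zeta\big(EH(M,\gamma,\xi)\big)=A_\zeta(\iota(e))=\iota(A_{\zeta_n}(e))=0$, so $EH(M,\gamma,\xi)\in\ker A_\zeta$. For the second assertion, suppose $EH(M,\gamma,\xi)=A_\zeta(y)$ for some $y\in SFH(-M,-\gamma)$. Apply $\pi$: then $e=\pi(\iota(e))=\pi(A_\zeta(y))=A_{\zeta_n}(\pi(y))=0$, contradicting that $e$ is a generator of a rank-one group. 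Hence $EH(M,\gamma,\xi)$ is not in the image of $A_\zeta$.

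The only genuine subtlety I anticipate is bookkeeping rather than mathematics: one must check that the orientation reversal $(M_i,\gamma_i)\mapsto(-M_i,-\gamma_i)$ carries a well-groomed decomposition to a well-groomed decomposition so that Theorem~\ref{thm:Decomp} genuinely applies at each stage, and that the classes $\zeta_i$ are the correct pushforwards so that the intertwining relations chain together. Both are routine: well-groomedness is a condition on the curves $V\cap S_i$ in $R(\gamma_i)$ and is insensitive to the ambient orientation, and the functoriality of $i_*$ under composition of inclusions makes the $\zeta_i$ automatically compatible. With these checks in place, the argument above is complete.
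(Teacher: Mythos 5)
Your proposal is correct and follows essentially the same route as the paper: compose the inclusions/projections along the hierarchy via Theorem~\ref{thm:HKM}, intertwine $A_{\zeta}$ with $A_{i_*(\zeta)}$ via Theorem~\ref{thm:Decomp}, and use that the action vanishes on the product of balls $(M_n,\gamma_n)$ to get both the kernel and the non-image statements. The explicit induction and the orientation-reversal bookkeeping you add are organizational details; the paper's proof does exactly the same computation with the composite maps $\iota$ and $\pi$ directly.
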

\begin{proof}
Consider a hierarchy (\ref{eq:SutHier}) associated to the contact structure $\xi$. By Theorem~\ref{thm:HKM}, there are maps
$$\iota\co SFH(-M_n,-\gamma_n)\cong\mathbb Z\to SFH(-M,-\gamma),$$
which sends a generator of $SFH(-M_n,-\gamma_n)$ to $EH(M,\gamma,\xi)$, and
$$\pi\co SFH(-M,-\gamma)\to SFH(-M_n,-\gamma_n)\cong\mathbb Z,$$
which sends $EH(M,\gamma,\xi)$ to a generator of $SFH(-M_n,-\gamma_n)$.

Given $\zeta\in H_1(M,\partial M)$, let $i_*(\zeta)$ be its image in $H_1(M_n,\partial M_n)$. Since $M_n$ consists of balls, $i_*(\zeta)=0$, hence $A_{i_*(\zeta)}=0$. (This result also follows from the fact that $A_{i_*(\zeta)}$ is a differential.) 
Using Theorem~\ref{thm:Decomp}, we get a commutative diagram:
$$
\begin{CD}
{SFH}(-M_n,-\gamma_n)@>\iota>>{SFH}(-M,-\gamma)\\
@V0VV@VA_{\zeta}VV\\
{SFH}(-M_n,-\gamma_n)@>\iota>>{SFH}(-M,-\gamma),
\end{CD}
$$
hence $A_{\zeta}(EH(M,\gamma,\xi))=0$.

Similarly, considering the commutative diagram 
$$
\begin{CD}
{SFH}(-M,-\gamma)@>\pi>>{SFH}(-M_n,-\gamma_n)\\
@VA_{\zeta}VV@V0VV\\
{SFH}(-M,-\gamma)@>\pi>>{SFH}(-M_n,-\gamma_n),
\end{CD}
$$
we conclude that $EH(M,\gamma,\xi)$ does not lie in the image of $A_{\zeta}$.
\end{proof}

\begin{rem}
Since $A^2_{\zeta}=0$, 
$A_{\zeta}$ can be viewed as a differential on the Floer homology group, thus one can talk about its homology. 
Corollary~\ref{cor:EH} says that the contact invariant represents a nontrivial class in the homology of $A_{\zeta}$. 
A version of Corollary~\ref{cor:EH} for weakly fillable contact structures on closed manifolds was proved in \cite{HN}, 
following the strategy of Ozsv\'ath and Szab\'o \cite{OSzGenus}. 
\end{rem}

\begin{proof}[Proof sketch of Corollary~\ref{cor:KerFiber}]
Decomposing $Y-K$ along $F$, we get a taut sutured manifold $(M,\gamma)$. If $F$ is not a fiber of any fibration, then $(M,\gamma)$ is not a product sutured manifold.
The argument in \cite{Gh,NiFibred} shows that we can construct two different sutured manifold hierarchies, and corresponding two tight contact structures $\xi_1,\xi_2$ obtained 
from gluing the tight contact structure on the balls via the two hierarchies, such that $EH(M,\gamma,\xi_1)$ and $EH(M,\gamma,\xi_2)$ are linearly independent. 
See also \cite{Ju2} for the version of argument adapted to sutured Floer homology.

It is showed in \cite{Ju2} that the inclusion map 
$$SFH(M,\gamma)\to HFK(Y,K)$$
induced by the decomposition
$$Y-K\stackrel{F}{\rightsquigarrow}(M,\gamma)$$
maps $SFH(M,\gamma)$ isomorphically onto $HFK(Y,K,[F],-g)$. Using Theorem~\ref{thm:Decomp} and Corollary~\ref{cor:EH}, 
we conclude that $\mathrm{Ker}A$ has rank at least $2$.
\end{proof}

The reader may find that the use of the contact class $EH$ is not necessary for the proof of Corollary~\ref{cor:KerFiber}. 
We choose this presentation so that the nontrivial elements in $\mathrm{Ker}A$ have their geometric meaning.

\section{Knots in $\#^{n}S^1\times S^2$}

Let $Y_n=\#^{n}S^1\times S^2$, $V_n=H_1(Y_n;\mathbb Z)$, $V_n'=H^1(Y_n;\mathbb Z)$. It is
well known that $\widehat{HF}(Y_n)$ as a $\Lambda^*V_n$--module is
isomorphic to $\Lambda^*V'_n$. Namely,
$\widehat{HF}(Y_n)\cong\Lambda^*V_n'$ as a group, and $A_{\zeta}$
is given by the contraction homomorphism
$$\iota_{\zeta}\co \Lambda^iV'_n\to \Lambda^{i-1}V'_n.$$

\begin{lem}\label{lem:Ker1}
In the module $\Lambda^*V_n'$, we have
$$\bigcap_{\zeta\in V_n}\mathrm{ker}\:\iota_{\zeta}=\mathbb Z\mathbf 1,$$
the subgroup generated by the unit element $\mathbf1$.
\end{lem}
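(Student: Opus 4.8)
The plan is to reduce the statement to a purely algebraic fact about the exterior algebra of a finitely generated free abelian group and then verify it on a basis. Recall that $V_n=H_1(Y_n;\mathbb Z)$ and $V_n'=H^1(Y_n;\mathbb Z)$ are free abelian of rank $n$, and the Kronecker pairing $V_n\otimes V_n'\to\mathbb Z$ is perfect. Fix a basis $e_1,\dots,e_n$ of $V_n'$ and let $e_1^*,\dots,e_n^*$ be the dual basis of $V_n$. Under this identification $\iota_{e_j^*}$ is the ordinary interior product with $e_j$ acting on $\Lambda^*V_n'$, and the contractions $\iota_{e_1^*},\dots,\iota_{e_n^*}$ span the operators $\iota_\zeta$, so the intersection $\bigcap_{\zeta\in V_n}\ker\iota_\zeta$ already equals $\bigcap_{j=1}^n\ker\iota_{e_j^*}$.

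First I would note that $\mathbf 1\in\Lambda^0V_n'$ lies in every $\ker\iota_\zeta$, since contraction lowers exterior degree by one and there is nothing in negative degree; hence $\mathbb Z\mathbf 1$ is contained in the intersection. For the reverse inclusion, observe that each $\iota_\zeta$ is homogeneous of degree $-1$, so if $x=\sum_{i=0}^n x_i$ with $x_i\in\Lambda^iV_n'$ and $\iota_\zeta(x)=0$ for all $\zeta$, then $\iota_\zeta(x_i)=0$ for every $i$ and every $\zeta$. It therefore suffices to show that for $i\ge 1$ the only $y\in\Lambda^iV_n'$ with $\iota_{e_j^*}(y)=0$ for all $j$ is $y=0$.

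To see this, write $y=\sum_{|I|=i}a_I\,e_I$, where $I$ runs over the $i$-element subsets of $\{1,\dots,n\}$, $e_I=e_{j_1}\wedge\cdots\wedge e_{j_i}$ for $I=\{j_1<\cdots<j_i\}$, and $a_I\in\mathbb Z$. For a fixed $j$ we have $\iota_{e_j^*}(e_I)=\pm\,e_{I\setminus\{j\}}$ when $j\in I$ and $\iota_{e_j^*}(e_I)=0$ otherwise, and as $I$ ranges over the $i$-subsets containing $j$ the sets $I\setminus\{j\}$ are pairwise distinct. Hence $\iota_{e_j^*}(y)=\sum_{I\ni j}\pm\,a_I\,e_{I\setminus\{j\}}$ is a sum of distinct basis vectors with coefficients $\pm a_I$, so it vanishes precisely when $a_I=0$ for every $I$ containing $j$. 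Letting $j$ range over all of $\{1,\dots,n\}$ and using $i\ge1$ (so every $I$ contains some index $j$), we conclude $a_I=0$ for all $I$, i.e.\ $y=0$. This finishes the proof. The only point requiring any care is the bookkeeping of signs in the interior product, but this is harmless here: for each fixed $j$ every coefficient $a_I$ (with $j\in I$) occurs alone, up to sign, as the coefficient of the basis element $e_{I\setminus\{j\}}$ in $\iota_{e_j^*}(y)$, so no cancellation among the $a_I$ can occur.
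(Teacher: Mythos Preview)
Your proof is correct and follows essentially the same approach as the paper: pick a basis of $V_n'$ with dual basis of $V_n$, and use the explicit formula for contraction on basis monomials to see that an element with a nonzero component in positive degree cannot be annihilated by every $\iota_{e_j^*}$. The only cosmetic difference is that the paper argues directly with a mixed-degree element by singling out one highest-degree monomial and exhibiting a single contraction that fails to kill it, whereas you first reduce to homogeneous pieces and then show all coefficients vanish; the underlying computation is the same.
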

\begin{proof}
Clearly $\mathbf1$ is in the kernel of all $\iota_{\zeta}$. Suppose $x\in\Lambda^*V'_n$, and the highest degree summand of $x$ has degree $i>0$. Let $\{\zeta_1,\dots,\zeta_n\}$ be a set of generators of
$V_n$, and 
let $\{\zeta_1',\dots,\zeta_n'\}$ be a basis of $V_n'$ such that $\zeta_i'(\zeta_j)=\delta_{ij}$. 
Without loss of generality, we can assume $x$ contains a term $k\zeta_1'\wedge\zeta_2'\wedge\cdots\wedge\zeta_i'$ where $k\in\mathbb Z\backslash\{0\}$, then 
$$\iota_{\zeta_1}(x)=k\:\zeta_2'\wedge\cdots\wedge\zeta_i'+\cdots$$
is nonzero. This finishes our proof.
\end{proof}

\begin{prop}\label{prop:FiberGenus}
Suppose $K\subset Y_{n}$ is a knot with simple Floer homology, and $Y_n-K$ is irreducible, then the genus of $K$ is $g=\frac n2$ and $K$ is
fibered.
\end{prop}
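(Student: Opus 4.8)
The plan is to combine the rank bound of Ghiggini and the author (as refined in Corollary~\ref{cor:KerFiber}) with the explicit $\Lambda^*V_n$--module structure of $\widehat{HF}(Y_n)$ computed in Lemma~\ref{lem:Ker1}. First I would argue that $K$ has genus $g=\frac n2$. Since $K$ is null-homologous with Seifert surface $F$ of genus $g$, and the knot Floer homology is supported in Alexander gradings between $-g$ and $g$ with $\widehat{HFK}(Y_n,K,[F],g)\neq 0$ and $\widehat{HFK}(Y_n,K,[F],-g)\neq 0$, the simplicity hypothesis $\mathrm{rank}\,\widehat{HFK}(Y_n,K)=2^{n-1}=\mathrm{rank}\,\widehat{HF}(Y_n)$ forces the entire group $\widehat{HFK}(Y_n,K)$ to be as small as possible. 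In particular $\widehat{HFK}(Y_n,K,[F],-g)$ is a direct summand of $\widehat{HFK}(Y_n,K)$, and under the identification $SFH(M,\gamma)\cong \widehat{HFK}(Y_n,K,[F],-g)$ (where $(M,\gamma)$ is obtained by decomposing $Y_n-K$ along $F$) this summand carries the full $\Lambda^*(H_1(Y_n)/\mathrm{Tors})$--action compatibly, by Theorem~\ref{thm:Decomp}. So I would track how much of $\widehat{HF}(Y_n)\cong\Lambda^*V_n'$ survives into the bottom Alexander grading.

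The key step is that, by Corollary~\ref{cor:KerFiber}, if $F$ were not a fiber of a fibration of $Y_n-K$ then $\mathrm{Ker}A\subset\widehat{HFK}(Y_n,K,[F],-g)$ would have rank at least $2$. On the other hand, simplicity forces $\widehat{HFK}(Y_n,K,[F],-g)$ to sit inside $\widehat{HF}(Y_n)\cong\Lambda^*V_n'$ in a way that makes $\mathrm{Ker}A$ at most rank one: indeed, the decomposition $Y_n-K\stackrel{F}{\rightsquigarrow}(M,\gamma)$ realizes $\widehat{HFK}(Y_n,K,[F],-g)$ as the image of an inclusion map $\iota$ which, together with the projection $\pi$, intertwines $A_\zeta$ with $A_{i_*\zeta}$, and by naturality the kernel of the $A$--action on the bottom summand maps into $\bigcap_{\zeta}\ker\iota_\zeta=\mathbb Z\mathbf 1$ of Lemma~\ref{lem:Ker1}, which has rank one. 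This contradiction shows $F$ is a fiber, i.e. $K$ is fibered. Once $K$ is fibered with fiber $F$ of genus $g$, the monodromy gives an open book for $Y_n$ whose page has Euler characteristic $1-2g$; comparing with the fact that $Y_n=\#^n(S^1\times S^2)$ and that a fibered knot's complement fibers over $S^1$ pins down $g=\frac n2$ (so that $2^{n-1}=2^{2g-1}$ matches $\mathrm{rank}\,\widehat{HF}(Y_n)$, and the bottom group has rank exactly $1$, consistent with $\widehat{HFK}$ of a fibered knot in its extremal grading).

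The main obstacle I expect is making the bookkeeping in the middle step precise: one must be careful that the identification $SFH(M,\gamma)\cong\widehat{HFK}(Y_n,K,[F],-g)$ is $\Lambda^*$--equivariant (this is exactly what Theorem~\ref{thm:Decomp} and Corollary~\ref{cor:EH} are for), and that the ranks line up so that simplicity genuinely forces the bottom summand to be the rank-one piece $\mathbb Z\mathbf 1$ rather than something larger. A secondary subtlety is establishing $g=\frac n2$ rather than merely $g\le \frac n2$: here one uses that $\widehat{HFK}(Y_n,K,[F],g)\neq 0$ detects the genus (Ozsv\'ath--Szab\'o), together with the fact that for a fibered knot the top group has rank one, so simplicity and the total rank count $2^{n-1}$ force the Alexander grading to spread over exactly $\{-g,\dots,g\}$ with the distribution of a genus-$g$ fibered knot in $\#^{2g}(S^1\times S^2)$; any excess or deficit in $g$ breaks the rank equality. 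I would also remark that irreducibility of $Y_n-K$ is used to guarantee the sutured manifold $(M,\gamma)$ obtained by decomposing along $F$ is taut, so that the hierarchy arguments behind Corollary~\ref{cor:KerFiber} apply.
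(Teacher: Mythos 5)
Your argument for fiberedness is essentially the paper's: simplicity makes $\widehat{HFK}(Y_n,K,[F],-g)$ a $\Lambda^*V_n$--submodule of $\widehat{HF}(Y_n)\cong\Lambda^*V_n'$ (because each $A_\zeta$ is a filtered map and the rank equality kills the spectral sequence differentials), so its subgroup $\mathrm{Ker}\,A$ lands in $\bigcap_\zeta\ker\iota_\zeta=\mathbb Z\mathbf 1$ of Lemma~\ref{lem:Ker1} and has rank at most $1$, contradicting Corollary~\ref{cor:KerFiber} unless $F$ is a fiber. That half is fine and is exactly the route taken in the paper.

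The genus statement $g=\tfrac n2$ is where you have a genuine gap. Your sketch derives it from ``the monodromy gives an open book for $Y_n$ whose page has Euler characteristic $1-2g$'' together with the rank count ``$2^{n-1}=2^{2g-1}$,'' but neither step is a constraint: $Y_n$ admits open books with one binding component and pages of arbitrarily large genus, and the total rank of $\widehat{HFK}$ of a fibered knot is not determined by its fiber genus, so nothing in your argument excludes $g\neq\tfrac n2$. (Topology alone only gives one inequality: since $\pi_1(Y_n)$, free of rank $n$, is a quotient of $\pi_1(F)$, free of rank $2g$, one gets $2g\ge n$; the reverse inequality genuinely needs Floer-theoretic input.) The paper closes this by a Maslov grading computation: fiberedness identifies the bottom group with $\mathbb Z\mathbf 1$, one checks that the top group $\widehat{HFK}(Y_n,K,[F],g)$ is generated by the image of $\Delta=\zeta_1'\wedge\cdots\wedge\zeta_n'$ (because every monomial is obtained from $\Delta$ by applying $A_\zeta$'s, which are filtered, so $\Delta$ sits in the top filtration level), and then compares gradings two ways: the symmetry \cite[Proposition~3.10]{OSzKnot} says the Maslov gradings of the top and bottom generators differ by $2g$, while $\mathbf 1=A_{\zeta_1}\circ\cdots\circ A_{\zeta_n}(\Delta)$ with each $A_\zeta$ dropping the Maslov grading by $1$ says they differ by $n$; hence $n=2g$. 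Some argument of this kind (or another genuine upper bound on $g$) is missing from your proposal.
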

\begin{proof}
Let $F$ be a minimal genus Seifert surface of genus $g$. By definition $$A_{\zeta}\co\widehat{CF}(Y_n)\to\widehat{CF}(Y_n)$$ is a filtered map, so $\widehat{CFK}(Y_n,K,[F],-g)$ is a subcomplex of $\widehat{CF}(Y_n)$. 
Moreover, since $K$ has simple Floer homology group, the rank of $\widehat{HFK}(Y_n,K)$ is the same as the rank of $\widehat{HF}(Y_n)$, 
so $\widehat{HFK}(Y_n,K,[F],-g)$ is a submodule of the $\Lambda^*V_n$--module $\widehat{HF}(Y_n)$. Similarly, $\widehat{HFK}(Y_n,K,[F],g)$ is a quotient module of $\widehat{HF}(Y_n)$.
 
Let $\mathrm{Ker}A$ be the subgroup of $\widehat{HFK}(Y_n,K,[F],-g)$ which is the intersection of $\mathrm{ker}A_{\zeta}$ for all $\zeta\in V_n$, then Lemma~\ref{lem:Ker1} shows that the rank of $\mathrm{Ker}A$ is at most $1$.
It follows from Corollary~\ref{cor:KerFiber} that $K$ is a fibered knot with fiber $F$, and $\widehat{HFK}(Y_n,K,[F],-g)$ is generated by $\mathbf1$.

Any
monomial $\zeta'_{i_1}\wedge\cdots\wedge\zeta'_{i_k}\in\widehat{HF}(Y_n)$ can be obtained by applying
a series of $A_{\zeta}$'s to $\Delta=\zeta'_1\wedge\cdots\wedge\zeta'_n$. Since $A_{\zeta}$ is a filtered map, we see that
$\Delta$ has the highest filtration, hence $\widehat{HFK}(Y_n,K,[F],g)$ is generated by the image of $\Delta$ under the quotient map $\widehat{HF}(Y_n)\to\widehat{HFK}(Y_n,K,[F],g)$.

By \cite[Proposition~3.10]{OSzKnot}, the difference between the Maslov grading of $\Delta$ and the Maslov grading of $\mathbf1$ is $2g$. 
On the other hand, since $$\mathbf1=A_{\zeta_1}\circ\cdots\circ A_{\zeta_n}(\Delta),$$ the difference between the Maslov grading of $\Delta$ and the Maslov grading of $\mathbf1$ is $n$.
So $n=2g$.
\end{proof}

\begin{proof}[Proof of Theorem~\ref{thm:Borromean}]
If $Y_n-K$ is reducible, then it has a $S^1\times S^2$ connected summand. We can remove this summand and regard $K$ as a knot in $Y_{n-1}$, which still has simple Floer homology group. 
Hence we may assume that $Y_n-K$ is irreducible.
By Proposition~\ref{prop:FiberGenus}, $n=2g$ where $g$ is the genus of $K$, and $K$ is fibered. Let
$F$ be the Seifert surface of $K$ which is a fiber of the
fibration. Pick a base point on $\partial F$. Let $\varphi\co F\to
F$ be the monodromy of the fibration such that $\varphi|_{\partial
F}$ is the identity. Let $\varphi_*\co\pi_1(F)\to \pi_1(F)$ be the induced map on $\pi_1(F)$. Let
$t$ represent a meridian of $K$, then
$$\pi_1(Y_{2g}-K)=\langle \pi_1(F),t\:|\:t\varphi_*(a)t^{-1}a^{-1}=1,\:\forall a\in\pi_1(F)\rangle.$$
After filling $Y_{2g}-K$ along the meridian, $t$ is killed, so we
get
\begin{equation}\label{eq:Quotient}
\pi_1(Y_{2g})=\langle \pi_1(F)\:|\:\varphi_*(a)a^{-1}=1,\:\forall
a\in\pi_1(F)\rangle,
\end{equation}
 which is a quotient group of $\pi_1(F)$. We
know that $\pi_1(F)$ is a free group of rank $2g$. On the other
hand, $Y_{2g}$ is the connected sum of $2g$ copies of $S^1\times
S^2$, so its $\pi_1$ is also a free group of rank $2g$. Since free
groups of finite ranks are Hopfian \cite{MKS}, the
relations in (\ref{eq:Quotient}) are all trivial, hence
$\varphi_*=\mathrm{id}$.

Now it is a standard fact that $\varphi$ is isotopic to the
identity map on $F$ through maps which fix $\partial F$ pointwise.
In fact, we define a map $$\Phi\co (F\times\{0,1\})\cup(\partial
F\times[0,1])\to F$$ by letting
$$\Phi(x,0)=\varphi(x), \Phi(x,1)=x,\quad\forall x\in F$$ and $$\Phi(x,t)=x,\quad \forall x\in\partial F.$$
Since $\varphi_*=\mathrm{id}$ and $F$ is a $K(\pi,1)$, we can
extend $\Phi$ to a map from $F\times I$ to $F$. This means that
$\varphi$ is homotopic hence isotopic to the identity map relative
to $\partial F$.

Since the monodromy $\varphi$ is isotopic to the identity, the
complement of $K$ is homeomorphic to $F\times S^1$, which is
homeomorphic to the complement of $B_g$. By a result of Gabai
\cite[Corollary~2.14]{G2}, knots in $Y_{2g}$ which do not lie in a
$3$--cell are determined by their complements. So $K=B_g$.
\end{proof}

\section{Links in $S^3$}

In this section, we will study links in $S^3$ that have simple Floer homology groups.
Ozsv\'ath and Szab\'o \cite{OSzLink} defined a multi-graded
$\mathbb Z/2\mathbb Z$--coefficient homology theory for links,
called link Floer homology, denoted $\widehat{HFL}(\cdot)$.
Although link Floer homology generally contains more information
than the knot Floer homology of a link, the rank of
$\widehat{HFL}(L)$ is equal to the rank of $\widehat{HFK}(L)$
\cite[Theorem~1.1]{OSzLink}. So Proposition~\ref{prop:LinkRank} can also be
stated in terms of link Floer homology. In fact, we will mainly
work with link Floer homology in our proof.

\begin{proof}[Proof of Proposition~\ref{prop:LinkRank}]
Without loss of generality, we will work with $\mathbb F=\mathbb
Z/2\mathbb Z$ coefficients. We will induct on the number of
components of $L$.

When $n=|L|=1$, the result is a consequence of Ozsv\'ath--Szab\'o
\cite{OSzGenus}. Assume that our result is already proved for
$(n-1)$--component links and let $L$ be an $n$--component link
such that
$$\mathrm{rank}\:\widehat{HFK}(L)=\mathrm{rank}\:\widehat{HFL}(L)=2^{n-1}.$$

If $L$ has a trivial component which bounds a disk in the
complement of $L$, then we can remove this component and apply the
induction hypothesis to conclude that $L$ is the unlink. From now
on, we assume $L$ has no trivial component.

Let $K_1$ be a component of $L$. Let $M$ be the rank two graded
vector space with one generator in grading $0$ and another in
grading $-1$. By Ozsv\'ath--Szab\'o
\cite[Proposition~7.1]{OSzLink}, there is a differential $D^1$ on
$\widehat{HFL}(L)$, such that the homology of
$(\widehat{HFL}(L),D^1)$ is $\widehat{HFL}(L-K_1)\otimes M$. Here
the two Floer homology groups have Alexander gradings in
$\underline{\mathrm{Spin}^c}(L-K_1)$, and the isomorphism is up to
some overall translation of the gradings. So the rank of
$\widehat{HFL}(L-K_1)$ is less than or equal to $2^{n-2}$.

Since $L-K_1$ is an $(n-1)$--component link, the rank of its link
(knot) Floer homology is greater than or equal to $2^{n-2}$, so
the rank should be exactly $2^{n-2}$. Hence the differential
$D^1=0$ and
$$\widehat{HFL}(L)\cong\widehat{HFL}(L-K_1)\otimes M$$
up to an overall translation of the gradings, where the Alexander
gradings are in $\underline{\mathrm{Spin}^c}(L-K_1)$. By the
induction hypothesis, $L-K_1$ is the $(n-1)$--component unlink,
hence its $\widehat{HFL}$ is supported in exactly one Alexander
grading. It follows that $\widehat{HFL}(L)$ is supported in
exactly one element in $\underline{\mathrm{Spin}^c}(L-K_1)$, thus
$\widehat{HFL}(L)$ is supported in one line in
$\underline{\mathrm{Spin}^c}(L)$. Now Ozsv\'ath--Szab\'o
\cite[Theorem~1.1]{OSzLinkNorm} implies that there exists a
nonzero element $h\in H^1(S^3-L;\mathbb Z)$, such that
$$x(\mathrm{PD}[h])+\sum_{i=1}^{n}|\langle h,\mu_i\rangle|=0,$$
where $\mu_i$ is the meridian of the $i$-th component of $L$. Thus
$|\langle h,\mu_i\rangle|=0$ for each $i$, which is impossible
since $h\ne0$, a contradiction.
\end{proof}

\end{document}